\def\Mod{{\rm \mbox{-}Mod}}
\def\Hom{{\rm Hom}}
\def\Im{{\rm Im\ }}
\def\Ker{{\rm Ker}}
\def\id{{\rm id}}
\def\A{{\mathscr{A}}}
\def\C{{\mathscr{C}}}
\def\K{{\mathscr{K}}}
\newtheorem{thm}{\bf Theorem}[section]
\newtheorem{cor}[thm]{\bf Corollary}
\newtheorem{lem}[thm]{\bf Lemma}
\newtheorem{prop}[thm]{\bf Proposition}
\newtheorem{rem}[thm]{\bf Remark}
\newtheorem{ex}[thm]{\bf Example}
\begin{document}
\title{A new approach to projectivity in the categories of complexes}
\author{Driss Bennis, J. R. Garc\'{\i}a Rozas,\\ Hanane Ouberka and Luis Oyonarte}
\date{}
\maketitle

\noindent{\large\bf Abstract.} Recently, several authors have adopted new alternative approaches in the study of some classical notions of modules. Among them, we find the notion of subprojectivity which was introduced to measure in a way the degree of projectivity of modules. The study of subprojectivity has recently been extended to the context of abelian categories, which has brought to light some interesting new aspects. For instance, in the category of complexes, it gives a new way to measure, among other things, the exactness of complexes. In this paper, we  prove that the subprojectivity notion provides a new sight of null-homotopic morphisms in the category of complexes. This will be proven through two main results. Moreover, various results which emphasize the importance  of subprojectivity in the category of complexes are also given. Namely, we give some applications by characterizing some classical rings and establish various examples that allow us to reflect the scope and limits of our results.

\bigskip

\small{\noindent{\bf Key words and phrases.} Subprojectivity domain; projective complex; null-homotopic morphism; contractible complex}

\small{\noindent{\bf 2010 Mathematics Subject Classification.} 16E05}

\section{Introduction}
\hskip .5cm Throughout the paper $R$ will denote an associative (non necessarily commutative) ring with unit. The category of left $R$-modules will be denoted by $R\Mod$. Modules are, unless otherwise explicitly stated, left $R$-modules.

The notion of subprojectivity was introduced in \cite{Sergio} as a new treatment in the analysis of the projectivity of a module. However, the study of the subprojectivity goes beyond that goal and, indeed, provides, among other things, a new and interesting perspective on some other known notions. In this way, an alternative perspective on the projectivity of an object of an abelian category $\A$ with enough projectives was investigated in \cite{SubprojAb} where, in addition, it was shown that subprojectivity can be used to measure characteristics different from the projectivity and that subprojectivity domains may not be restricted to a single object. On the contrary, the subprojectivity domains of a whole class of objects can be computed, giving rise to very interesting characterizations. For instance, the subprojectivity domain of the whole class of DG-projective complexes is very useful to measure the exactness of complexes (see \cite[Proposition 2.5]{SubprojAb}).

Recall that, given two objects $M$ and $N$ of $\A$, $M$ is said to be $N$-subprojective if for every epimorphism $g:B\to N$ and every morphism $f:M\to  N$, there exists a morphism $h:M\to B$ such that $gh=f$, or equivalently, if every morphism $M\to N$ factors through a projective object (see \cite[Proposition 2.7]{SubprojAb}). The subprojectivity domain of any object $M$, denoted ${\underline {\mathfrak{Pr}}}_{\A}^{-1}(M)$, is defined as the class of all objects $N$ such that $M$ is $N$-subprojective, and the subprojectivity domain of a whole class $\mathfrak{C}$ of $\A$, ${\underline {\mathfrak{Pr}}}_{\A}^{-1}(\mathfrak{C})$, is defined as the class of objects $N$ such that every $C$ of $\mathfrak{C}$ is $N$-subprojective.
 
In this paper we go deeper in the investigation of subprojectivity in the category of complexes. In this sense, when studying subprojectivity of complexes, it is observed that the concept of subprojectivity is relatively closely linked to that of null-homotopy of morphisms. Therefore, what we intend in the two main results of this paper (Theorems \ref{thm-4-2} and \ref{thm-4-1}) is to deepen the understanding of this relationship. Namely, in Theorem \ref{thm-4-2}, we prove that if $N_{n}\in{\underline {\mathfrak{Pr}}}_{R\Mod}^{-1}(M_n)$ for every $n \in \mathbb{Z}$, then $N\in{\underline {\mathfrak{Pr}}}_{\C(R)}^{-1}(M)$ ($\C(R)$ is the category of complexes) if and only if $\Hom_{\K(R)}(M[-1],K)=0$ ($\K(R)$ is the homotopy category) for every short exact sequence of complexes $0\to K \to P \to N \to0$ with $P$ projective. The proof of this theorem is based on a new characterization of the subprojectivity of an object in any abelian category with enough projectives in terms of the splitting of some particular short exact sequences (Proposition \ref{prop-pull}).  

The second main result of the paper (Theorem \ref{thm-4-1}) assures that for any two complexes $M$ and $N$ with $N_{n+1} \in {\underline {\mathfrak{Pr}}}_{R\Mod}^{-1}(M_n)$ for every $n\in \mathbb{Z}$, the conditions $N \in {\underline {\mathfrak{Pr}}}_{\C(R)}^{-1}(M)$ and $\Hom_{\K(R)}(M,N)=0$ are equivalent. This time, the idea is based on a new characterization of subprojectivity in terms of factorizations by contractible complexes (Proposition \ref{pro-cont2}). 

Theorem \ref{thm-4-1} allows us to determine exactly when a complex $N$ is in the subprojectivity domain of all the shifts $M[n]$ of a given complex $M$ (Proposition \ref{prop-shift}), which, at the same time helps in characterizing subprojectivity domains of complexes of the form $\oplus_{n\in \mathbb{Z}}\overline{M}[n]$ (Proposition \ref{lem-discs}) and of the form $\oplus_{n\in \mathbb{Z}}\underline{M}[n]$ (Proposition \ref{prop-sph}) for a given module $M$. A particular case of Proposition \ref{prop-sph} typifies exact complexes in terms of subprojectivity in the following sense: $N$ is exact if and only if $N\in{\underline {\mathfrak{Pr}}}_{\C(R)}^{-1}(\underline{R}[n])$ for every $n\in \mathbb{Z}$ (Corollary \ref{cor-exac}). Motivated by this result, we asked whether subprojectivity can measure the exactness of a complex $N$ at each $N_i$. In fact, we prove that, for any  complex  $N$ and any $n\in \mathbb{Z}$, 
 $N \in {\underline {\mathfrak{Pr}}}_{\C(R)}^{-1}(\underline{R}[n])$  if and only if $H_n(N)=0$ (see Proposition \ref{prop-spherR}). This result allows us to answer two interesting questions. Namely, we provide  an example showing that the subprojectivity domains are not closed under kernel of epimorphisms (see Example \ref{exmp-1-spherR}). And, we give an example showing that the equivalence of Theorem \ref{thm-4-2} mentioned above does not hold in general if we replace  the condition  ``$P$ is projective''  with $P\in {\underline {\mathfrak{Pr}}}_{\C(R)}^{-1}(M)$  (see Remark \ref{rem-cond1} and Example \ref{exmp-2-spherR}).

It is worth mentioning that the necessity and the importance of the conditions given in the main Theorems \ref{thm-4-2} and \ref{thm-4-1} are deeply discussed in Propositions \ref{prop-compon} and \ref{prop-semisimple}, respectively, and Example \ref{ex-2main1}.   

All this is done in Section \ref{sect:main}. \\

Finally,   Section \ref{sect:aplic} is devoted to some applications. Namely, we give, as consequences of Theorem \ref{thm-4-2}, some new characterizations of some classical rings. In Proposition \ref{prop-hered} we characterize left hereditary rings in terms of subprojectivity as those rings for which every subcomplex of a DG-projective complex is DG-projective. Furthermore, we do it without the condition ``Every exact complex of projective modules is projective'' needed in \cite[Proposition 2.3]{Yang2}.

Following the same context, subprojectivity also makes it possible to characterize rings of weak global dimension at most $1$, and using subprojectivity domains we prove that these rings are the ones over which subcomplexes of DG-flat complexes are always also DG-flat (Proposition \ref{prop-Weak-dim}).  As a consequence, left semi-hereditary rings are also characterized in terms of subprojectivity (Corollary  \ref{cor-semi-her}).

Finally, it is worth noting that semisimple rings are also characterized in terms of subprojectivity. In fact, this was a consequence of the study of the condition  ``$N_{n+1} \in {\underline {\mathfrak{Pr}}}_{R\Mod}^{-1}(M_n)$ for every $n \in \mathbb{Z}$"  assumed in Theorem \ref{thm-4-1}. Namely, we prove  that the ring $R$ must be semisimple when this condition implies the condition   $N \in {\underline {\mathfrak{Pr}}}_{\C(R)}^{-1}(M)$ for every two complexes $M$ and $N$ (Proposition \ref{prop-semisimple}).

\section{Preliminaries}	

\hskip .5cm In this section we fix some notations and recall some definitions and basic results     that will be used throughout this article.\\

Recall that, for two objects $M$ and $N$ of an abelian category with enough projectives, $M$ is said to be $N$-sub\-pro\-jec\-ti\-ve if for every epimorphism $g:B \to N $ and every morphism $f:M\to N$,  there exists a morphism $h:M\to B$ such that $gh=f$. Equivalently, $M$ is $N$-subprojective if and only if every morphism $M\to N$ factors through a projective object (\cite[Proposition 2.7]{SubprojAb}). The subprojectivity domain of any object $M$ is defined as $${\underline {\mathfrak{Pr}}}_{\A}^{-1}(M)=\left\{ N\in\A;\ M\ \mbox{is}\ N\mbox{-subprojective}\right\}.$$ 

By a complex $X$ of modules we mean a sequence of modules and morphisms $$\xymatrix{\cdots \ar[r] & X_{2}\ar[r]^{d_2} & X_{1}\ar[r]^{d_1}  &X_{0}\ar[r]^{d_0} &X_{-1}\ar[r]^{d_{-1}} &X_{-2}\ar[r]&\cdots}$$ such that $d_nd_{n+1}=0$ for all $n\in \mathbb{Z}$. If $ \Im d_{n+1}=\ker\ d_n$ for all $n \in\mathbb{Z}$ then we say that $X$ is exact, and given an $R$-module $M$, $X$ is said to be $\Hom_R(M,-)$-exact if the complex of abelian groups $\Hom_R(M,N)$ is exact. We denote by $\epsilon_n^X: X_{n} \to \Im d_{n}$ the canonical epimorphism and by $\mu_n^X : \Ker(d_{n-1}) \to X_{n-1}$ the canonical monomorphism.

The $n^{th}$ boundary (respectively, cycle, homology) of a complex $X$ is defined as $\Im\, d_{n+1}^X$ (respectively, $\Ker\, d_n^X$,  $ \Ker\, d_n / \Im\, d_{n+1}$) and it is denoted by $B_n(X)$ (respectively, $Z_n(X)$, $H_n(X)$).

Throughout the paper, we use the following particular kind of complexes:

\begin{description}
\item[Disc complex.] Given a module $M$, we denote by $\overline{M}$ the complex $$\xymatrix{\cdots \ar[r] &0\ar[r] &M\ar[r]^{\id_M}  &M\ar[r] &0\ar[r] &\cdots}$$ with all terms $0$ except $M$ in the degrees $1$ and $0$.
\item[Sphere complex.] Also, for a module $M$, we denote by $\underline{M}$ the complex $$\xymatrix@1{\cdots \ar[r] &0\ar[r]  &M\ar[r] &0\ar[r] &\cdots}$$ with all terms $0$ except $M$ in the degree $0$.
\item[Shift complex.] Let $X$ be a complex with differential $d^X$ and fix an integer $n$. We denote by $X[n]$ the complex consisting of $X_{i-n}$ in degree $i$ with differential $(-1)^nd_{i-n}^X$.
\end{description}

Now, by a morphism of complexes $f:X\rightarrow Y$ we mean a family of morphisms  $f_n: X_n\rightarrow Y_n$ such that $d^{Y}_nf_n=f_{n-1}d^{X}_n$ for all $n\in \mathbb{Z}$. The category of complexes of $R$-modules will be denoted by $\C(R)$. For two complexes $X$ and $Y$, we use $\Hom_{\C(R)}(X,Y)$ to present the group of all morphisms of complexes from $X$ to $Y$.
 
A morphism of complexes $f:X \rightarrow Y$ is said to be null-homotopic if, for all $n\in\mathbb{Z}$, there exist morphisms $s_n:X_n \rightarrow Y_{n+1}$ such that for any $n$ we have $f_n=d^{Y}_{n+1}s_n+s_{n-1}d_n^{X}$, and then we say that $f$ is null-homotopic by $s$. In particular, for a complex $X$, $\id_X$ is null-homotopic if and only if $X$ is of the form $\oplus_{n \in \mathbb{Z}} \overline{M_n}[n]$ for some family of modules $M_n\in R$-Mod. A complex of this special type is called contractible.
 
Two morphisms of complexes $f$ and $g$ are homotopic, $f \sim g$ in symbols, if $f-g$ is null-homotopic. The relation $f \sim g$ is an equivalence relation. The homotopy category $\K(R)$ is defined as the one having the same objects as $\C(R)$, and which morphisms are  homotopy equivalence classes of morphisms in $\C(R)$.\\
 
For complexes $X$ and $Y$, we let $\Hom^{\bullet}(X,Y)$ denote the complex of abelian groups with $$ \Hom^{\bullet}(X,Y)_{n}=\prod_{i\in \mathbb{ Z}}\Hom_R(X_{i},Y_{i+n})$$ and $$ d_{n}^{\Hom^{\bullet}(X,Y)}(\psi)=(d^Y_{i+n}\psi_i -(-1)^{n}\psi_{i-1}d_i^X)_{i\in \mathbb{ Z}}.$$ 

Note that for every $n \in \mathbb{Z}$, $$Z_n(\Hom^{\bullet}(X,Y))=\Hom_{\C(R)}(X[n],Y)=\Hom_{\C(R)}(X,Y[-n])$$ and $$H_n(\Hom^{\bullet}(X,Y))=\Hom_{\K(R)}(X[n],Y)=\Hom_{\K(R)}(X,Y[-n]).$$ For every complex $X$, $\Hom^{\bullet}(X,-)$ is a left exact functor from the category of complexes of modules to the category of complexes of abelian groups.

\section{Subprojectivity and null-homotopy} \label{sect:main}
\hskip .5cm As mentioned in the introduction, subprojectivity of complexes is closely related with null-homotopy of morphisms of complexes and kernels of epimorphisms. The aim of this section is to deepen the understanding of this relationship. 

We start with a new characterization of subprojectivity in terms of splitting short exact sequences which will be considered somehow as the subprojectivity analogue of the classical characterization of projectivity.  

We fix the following notation: the pullback of two morphisms $g:C \to B$ and $f:A \to B$ will be denoted by $(D,g',f')$.

\begin{prop}\label{prop-pull}
Let $\A$ be an abelian category with enough projectives. If $M$ and $N$ are two objects of $\A$, the following conditions are equivalent.
\begin{enumerate} 
\item $N\in{\underline{\mathfrak{Pr}}}_{\mathscr{A}}^{-1}(M)$.
\item For every epimorphism $g:K \to N$ and every morphism $f:M \to N$. The epimorphism $g':D\to M$ given by the pullback $(D,g',f')$ of $g$ and $f$, splits.
\item There exists an epimorphism $g:P\to N$ with $P$ projective such that for every morphism $f:M \to N$, the epimorphism $g':D\to M$ given by the pullback $(D,g',f')$ of $g$ and $f$, splits.
\item There exists an epimorphism $g:P\to N$ with $P\in{\underline{\mathfrak{Pr}}}_{\mathscr{A}}^{-1}(M)$ such that for every morphism $f:M \to N$, the epimorphism $g':D\to M$ given by the pullback $(D,g',f')$ of $g$ and $f$, splits.
\end{enumerate}
\end{prop}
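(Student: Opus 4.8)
The plan is to prove the cycle of implications $(1)\Rightarrow(2)\Rightarrow(3)\Rightarrow(4)\Rightarrow(1)$, with essentially all of the content concentrated in a single observation about the pullback square, combined with the factorization characterization of subprojectivity recalled in the Preliminaries (namely that $M$ is $N$-subprojective if and only if every morphism $M\to N$ factors through a projective object).

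The pivotal observation I would isolate first is a correspondence supplied by the universal property of the pullback. Given an epimorphism $g:K\to N$ and a morphism $f:M\to N$ with pullback $(D,g',f')$, the square $gf'=fg'$ commutes, and a morphism $h:M\to K$ satisfies $gh=f$ if and only if the pair $(\id_M,h)$ induces, by the universal property, a morphism $s:M\to D$ with $g's=\id_M$ and $f's=h$; conversely any section $s$ of $g'$ yields $h:=f's$, for which $gh=gf's=fg's=f\,\id_M=f$. In short, \emph{$f$ lifts along $g$ if and only if the pullback epimorphism $g'$ splits} (and $g'$ is indeed an epimorphism, since pullbacks of epimorphisms are epimorphisms in an abelian category). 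This is precisely the bridge between the definition of $N$-subprojectivity, phrased via lifts, and the splitting condition occurring in $(2)$--$(4)$.

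With this correspondence in hand, three of the implications are routine. For $(1)\Rightarrow(2)$: if $M$ is $N$-subprojective, then for any epimorphism $g$ and any $f$ a lift $h$ exists, so $g'$ splits. For $(2)\Rightarrow(3)$: since $\A$ has enough projectives there is an epimorphism $g:P\to N$ with $P$ projective, and $(2)$ applies verbatim to this particular $g$. For $(3)\Rightarrow(4)$: any projective $P$ lies in ${\underline{\mathfrak{Pr}}}_{\A}^{-1}(M)$, because every morphism $M\to P$ factors through the projective object $P$ by way of the identity; hence the epimorphism furnished by $(3)$ already satisfies the hypothesis demanded in $(4)$.

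The one implication that requires a genuine (if brief) argument is $(4)\Rightarrow(1)$, which I expect to be the main point. Here we are handed a \emph{single} epimorphism $g:P\to N$ with $P\in{\underline{\mathfrak{Pr}}}_{\A}^{-1}(M)$ whose associated pullback epimorphisms all split, and from this we must deduce $N$-subprojectivity, a statement about \emph{all} epimorphisms onto $N$. I would close this gap using the factorization form of subprojectivity: given any $f:M\to N$, form the pullback of $g$ and $f$; by $(4)$ the epimorphism $g'$ splits, so by the correspondence above $f$ factors as $M\stackrel{h}{\to}P\stackrel{g}{\to}N$. Since $P\in{\underline{\mathfrak{Pr}}}_{\A}^{-1}(M)$, the morphism $h$ in turn factors through a projective object $Q$, say $h=ba$ with $a:M\to Q$ and $b:Q\to P$; therefore $f=(gb)a$ factors through the projective $Q$. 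As every morphism $M\to N$ thus factors through a projective, $M$ is $N$-subprojective, i.e.\ $N\in{\underline{\mathfrak{Pr}}}_{\A}^{-1}(M)$, which completes the cycle.
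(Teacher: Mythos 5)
Your proposal is correct and follows essentially the same route as the paper: the same lift-versus-section correspondence from the universal property of the pullback drives $(1)\Rightarrow(2)$, the middle implications are handled identically, and for $(4)\Rightarrow(1)$ you obtain the factorization $f=g(f's)$ through $P$ exactly as the paper does, merely spelling out the final factorization through a projective where the paper instead cites \cite[Proposition 2.2]{SubprojAb}.
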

\begin{proof} $1.\Rightarrow 2.$ Let $g:K \to N$ be an epimorphism, $f:M \to N$ be a morphism and $(D,g',f')$ be their pullback. Since $N\in{\underline {\mathfrak{Pr}}}_{\mathscr{A}}^{-1}(M)$, there exists a morphism $h:M \to K$ such that the following diagram commutes $$\xymatrix{M \ar[ddr]_h\ar[drr]^{\id_M}& & \\ & D\ar[r]_{g'}\ar[d]^{f'}&M\ar[d]^f \\ & K\ar[r]_g & N}$$ Then, by the universal property of pullbacks, there exists a morphism $k:M\to D$ such that $g'k=id_M$. Hence $g'$ splits, as desired.

$2.\Rightarrow 3.$ This is clear since the category $\A$ is supposed to have enough projectives.

$3. \Rightarrow 4.$ This is clear since every projective complex belongs to ${\underline {\mathfrak{Pr}}}_{\mathscr{A}}^{-1}(M)$.

$4. \Rightarrow 1.$ Let $g:P \to N$ be the epimorphism of statement 4., $f:M \to N$ be a morphism and $(D,g',f')$ their pullback $$\xymatrix{D\ar[r]^{g'}\ar[d]_{f'} & M\ar[d]^f \\ P\ar[r]^g & N}$$ Then, by assumption, there exists a morphism $h:M\to D$ such that  $g'h=id_M$, hence $f= fg'h=gf'h$. Therefore, $N \in {\underline {\mathfrak{Pr}}}_{\mathscr{A}}^{-1}(M)$ (see \cite[Proposition 2.2]{SubprojAb}).
\end{proof}

The following lemma will be useful in the proof of Theorem \ref{thm-4-2}.

\begin{lem}\label{lem-pull}
If $(D,g',f')$ is the pullback of two morphisms of complexes $g:C\to B$ and $f:A\to B$, then $(D_n,g'_n,f'_n)$ is the pullback of $g_n:C_n \to B_n$ and $f_n:A_n \to B_n$ for every $n\in \mathbb{Z}$.
\end{lem}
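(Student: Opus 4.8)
The plan is to combine the uniqueness of pullbacks up to a compatible isomorphism with an explicit construction whose degreewise components are visibly the pullbacks of the $g_n$ and $f_n$. The one conceptual point to keep in mind is that the degreewise universal property cannot be checked directly: a module morphism into $A_n$ or $C_n$ in a single degree does not assemble into a morphism of complexes, so the universal property of $D$ in $\C(R)$ cannot be applied to it. This is exactly what forces the argument to proceed through a construction rather than a term-by-term verification, and it is the only real subtlety here.

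First I would recall that $\C(R)$ is abelian, so the pullback of $g:C\to B$ and $f:A\to B$ may be realized concretely as $P=\Ker(\psi)$, where $\psi:A\oplus C\to B$ is the morphism of complexes given in each degree by $(a,c)\mapsto f(a)-g(c)$, together with the morphisms $p':P\to A$ and $q':P\to C$ obtained by restricting the two canonical projections of $A\oplus C$. That $\psi$ is a morphism of complexes is immediate from $f$ and $g$ being so, and a routine check of the universal property shows that $(P,p',q')$ is a pullback of $g$ and $f$ in $\C(R)$, with $p'$ playing the role of $g'$ and $q'$ that of $f'$.

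Next I would compute $P$ degreewise. Since both the biproduct $\oplus$ and the kernel are formed degreewise in $\C(R)$ (the differential of $P$ being induced from that of $A\oplus C$), one has $(A\oplus C)_n=A_n\oplus C_n$, the component $\psi_n:A_n\oplus C_n\to B_n$ is $(a,c)\mapsto f_n(a)-g_n(c)$, and $P_n=\Ker(\psi_n)=\{(a,c): f_n(a)=g_n(c)\}$. This is precisely the pullback of $g_n:C_n\to B_n$ and $f_n:A_n\to B_n$ in $R\Mod$, with $p'_n$ and $q'_n$ the restricted projections. Hence $(P,p',q')$ is a pullback in $\C(R)$ whose degree-$n$ component $(P_n,p'_n,q'_n)$ is the desired pullback in $R\Mod$.

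Finally, since $(D,g',f')$ is assumed to be a pullback of $g$ and $f$ as well, there is a unique isomorphism $\theta:D\to P$ in $\C(R)$ with $p'\theta=g'$ and $q'\theta=f'$. Reading this equality in degree $n$ gives an isomorphism $\theta_n:D_n\to P_n$ with $p'_n\theta_n=g'_n$ and $q'_n\theta_n=f'_n$, and transporting the universal property of $P_n$ along $\theta_n$ shows that $(D_n,g'_n,f'_n)$ is a pullback of $g_n$ and $f_n$ in $R\Mod$, as claimed. The main obstacle is conceptual rather than computational: once one realizes the proof must go through construction-plus-uniqueness, the remaining checks—that $\psi$ is a chain map, that $\Ker(\psi)$ is a pullback, and that the pullback property transports along $\theta_n$—are all routine.
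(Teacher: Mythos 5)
Your proof is correct, but it takes a genuinely different route from the paper's. The paper verifies the universal property of $(D_n,g'_n,f'_n)$ directly: given module morphisms $\alpha:X\to A_n$ and $\beta:X\to C_n$ with $f_n\alpha=g_n\beta$, it promotes them to morphisms of complexes $\overline{\alpha},\overline{\beta}:\overline{X}[n-1]\to A,C$ using the disc complex, applies the universal property of $D$ in $\C(R)$, and reads off the degree-$n$ component. You instead build an explicit pullback $P=\Ker\bigl((a,c)\mapsto f(a)-g(c)\bigr)$, observe that biproducts and kernels in $\C(R)$ are computed degreewise, and transport the conclusion to $D$ along the canonical isomorphism of pullbacks. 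Both arguments are complete; yours is perhaps more transparent to a reader who thinks of pullbacks as kernels, while the paper's is shorter and avoids invoking uniqueness of pullbacks. One remark, though: your opening claim that ``the degreewise universal property cannot be checked directly'' because ``a module morphism into $A_n$ in a single degree does not assemble into a morphism of complexes'' is not accurate, and the paper's proof is precisely a counterexample to it. The functor $X\mapsto\overline{X}[n-1]$ is left adjoint to evaluation at degree $n$ (a morphism $\overline{X}[n-1]\to A$ is freely determined by its degree-$n$ component), so every test datum in $R\Mod$ does lift canonically to $\C(R)$; abstractly, evaluation at degree $n$ has a left adjoint and therefore preserves all limits, pullbacks included. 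This does not affect the validity of your proof, but the ``conceptual obstacle'' you identify as forcing the construction-plus-uniqueness strategy is not actually an obstacle.
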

\begin{proof}
Let $\alpha :X \to A_n$ and $\beta:X \to C_n$ be two morphisms of modules such that $f_n \alpha= g_n \beta$ and consider the two morphisms of complexes $\overline{\alpha}:\overline{X}[n-1]\to A$ and $\overline{\beta}:\overline{X}[n-1]\to C$ induced by $\alpha$ and $\beta$, respectively. It is straightforward to verify that $f\overline{\alpha}=g\overline{\beta}$, so there exists a unique morphism of complexes $h:\overline{X}[n-1]\to D$ such that $g'h=\overline{\alpha}$ and $f'h=\overline{\beta}$. Then, $g'_nh_n=\alpha$ and $f'_nh_n=\beta$. 

The unicity of $h_n:X \to D_n$ comes from the unicity of $h$.
\end{proof}

Now, we give the first main result of the paper.

\begin{thm}\label{thm-4-2}
Let $M$ and $N$ be two complexes such that $N_n \in {\underline {\mathfrak{Pr}}}_{R\Mod}^{-1}(M_n)$ for every $n\in \mathbb{Z}$. Then, the following statements are equivalent.
\begin{enumerate}
\item $N \in {\underline {\mathfrak{Pr}}}_{\C(R)}^{-1}(M)$.
\item For every short exact sequence $0\to K \to P \to N \to0$ with $P$ projective, the equation $\Hom_{\K(R)}(M[-1],K)=0$ holds.
\item There exists a short exact sequence $0\to K \to P \to N \to 0$ with $P$ projective such that $\Hom_{\K(R)}(M[-1],K)=0$.
\item There exists a short exact sequence $0\to K \to P \to N \to 0$ with $P\in {\underline {\mathfrak{Pr}}}_{\C(R)}^{-1}(M)$ such that $\Hom_{\K(R)}(M[-1],K)=0$.
\end{enumerate}
\end{thm}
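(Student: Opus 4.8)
The plan is to prove the cycle $1 \Rightarrow 2 \Rightarrow 3 \Rightarrow 4 \Rightarrow 1$, in which two of the steps are essentially formal. For $2 \Rightarrow 3$ I would use that $\C(R)$ has enough projectives, so a short exact sequence $0\to K \to P \to N \to 0$ with $P$ projective exists and ``for every'' specializes to ``there exists''; and $3 \Rightarrow 4$ is immediate because every projective complex lies in ${\underline{\mathfrak{Pr}}}_{\C(R)}^{-1}(M)$. All the content therefore sits in $1 \Rightarrow 2$ and $4 \Rightarrow 1$, and both rest on a single device that I would set up first.

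Fix a short exact sequence $0\to K \to P \to N \to 0$, with inclusion $\iota\colon K \to P$ and epimorphism $g\colon P \to N$. For any morphism $f\colon M \to N$, form the pullback $(D,g',f')$ of $g$ and $f$; its kernel satisfies $\Ker g'\cong K$, so one gets a short exact sequence $0\to K \to D \to M \to 0$. The first observation is that this sequence is \emph{degreewise split}: by Lemma \ref{lem-pull} the pullback is computed in each degree, so $g'_n$ is the pullback projection of $g_n$ and $f_n$, and since $M_n$ is $N_n$-subprojective, Proposition \ref{prop-pull} applied in $R\Mod$ makes $g'_n$ split for every $n$ (this is precisely where the componentwise hypothesis $N_n\in{\underline{\mathfrak{Pr}}}_{R\Mod}^{-1}(M_n)$ is used). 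A degreewise split short exact sequence of complexes carries a connecting morphism $\delta_f\colon M \to K[1]$, given on components by $\delta_{f,n}=d^P_n\sigma_n-\sigma_{n-1}d^M_n$, where $\sigma_n\colon M_n \to P_n$ is any degreewise lift of $f_n$ along $g_n$ (such lifts exist, again by subprojectivity). Its homotopy class lives in $\Hom_{\K(R)}(M,K[1])=\Hom_{\K(R)}(M[-1],K)$, and the standard dictionary for degreewise split sequences says $g'$ splits in $\C(R)$ if and only if this class is $0$; the nontrivial direction absorbs a null-homotopy of $\delta_f$ into a correction $\sigma_n \mapsto \sigma_n+\iota_n t_n$ of the lift, turning it into a genuine chain-map section.

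With this device, $4 \Rightarrow 1$ is quick: given the sequence of statement $4$ with $\Hom_{\K(R)}(M[-1],K)=0$, every connecting class vanishes, so every $g'$ splits, and since $P\in{\underline{\mathfrak{Pr}}}_{\C(R)}^{-1}(M)$, Proposition \ref{prop-pull} yields $N\in{\underline{\mathfrak{Pr}}}_{\C(R)}^{-1}(M)$. The implication $1\Rightarrow 2$ needs to pass from ``all connecting classes vanish'' to ``the whole group $\Hom_{\K(R)}(M[-1],K)$ vanishes'', and this is the crux. The key point is that a projective complex $P$ is contractible, so $\Hom_{\K(R)}(M[-1],P)=H_{-1}(\Hom^{\bullet}(M,P))=0$. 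Hence, given any cycle $\phi\in Z_{-1}(\Hom^{\bullet}(M,K))$ representing a class in $\Hom_{\K(R)}(M[-1],K)$, its image $\iota_*\phi$ is a boundary in $\Hom^{\bullet}(M,P)$, say $\iota_*\phi=d(\sigma)$ for some $\sigma\in\prod_i\Hom_R(M_i,P_i)$; setting $f:=g\sigma$ produces a genuine morphism of complexes $M\to N$ whose connecting morphism, for the lift $\sigma$, is exactly $\phi$. Thus every class is realized as some $[\delta_f]$. Assuming statement $1$, Proposition \ref{prop-pull} makes every such $g'$ split, so every $[\delta_f]=0$, and by the realization $\Hom_{\K(R)}(M[-1],K)=0$.

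I expect the main obstacle to be this realization step in $1\Rightarrow 2$: the hypothesis a priori controls only the particular connecting classes arising from pullbacks, and converting it into a statement about the entire group requires the contractibility of $P$ to lift an arbitrary cycle $\phi$ through $g$, after which one must verify that $f=g\sigma$ really is a chain map (this uses $g\iota=0$) and that its connecting morphism recovers $\phi$ on the nose. Keeping the shift and sign conventions consistent throughout --- the differential $-d^K$ on $K[1]$, and the identifications $H_{-1}(\Hom^{\bullet}(M,-))=\Hom_{\K(R)}(M[-1],-)=\Hom_{\K(R)}(M,(-)[1])$ --- is the other place where care will be needed.
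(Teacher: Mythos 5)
Your proof is correct, and its skeleton — the cycle $1\Rightarrow2\Rightarrow3\Rightarrow4\Rightarrow1$ with the two formal steps, and the treatment of $4\Rightarrow1$ via the degreewise-split pullback sequence $0\to K\to D\to M\to 0$, its identification with a mapping-cone sequence, and the splitting criterion of \cite[Proposition 3.3.2]{Enochs} combined with Proposition \ref{prop-pull} and Lemma \ref{lem-pull} — coincides with the paper's. Where you genuinely diverge is $1\Rightarrow2$. The paper works entirely at the level of the complex $\Hom^{\bullet}(M,-)$: it assembles a $3\times3$ diagram whose columns of cycles and of degree-$0$ components are exact (by statement $1$ and by the componentwise hypothesis, respectively), deduces exactness of the column of boundaries, and then applies the Snake Lemma to land in $0\to H_{-1}(\Hom^{\bullet}(M,K))\to H_{-1}(\Hom^{\bullet}(M,P))=0$. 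You instead make the connecting homomorphism explicit: using $H_{-1}(\Hom^{\bullet}(M,P))=0$ you lift an arbitrary cycle $\phi$ to some $\sigma$ with $d(\sigma)=\iota_*\phi$, check that $f=g\sigma$ is a chain map whose pullback has connecting class $[\phi]$, and then kill that class by the splitting that statement $1$ forces via Proposition \ref{prop-pull}. The two arguments encode the same homological fact — surjectivity of the connecting map $\Hom_{\C(R)}(M,N)\to H_{-1}(\Hom^{\bullet}(M,K))$ in the long exact sequence of $\Hom^{\bullet}(M,-)$, which holds because projective complexes are contractible — but yours has the merit of running both substantive implications through the single pullback/connecting-morphism device, at the price of the sign- and element-chasing that the paper's diagram packages into the Snake Lemma. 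The only point you use silently and should record is that $[\delta_f]$ does not depend on the chosen degreewise lift $\sigma$: two lifts differ by a degree-$0$ element of $\Hom^{\bullet}(M,K)$, so the corresponding $\delta_f$'s differ by a boundary.
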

\begin{proof}
$1. \Rightarrow 2.$ Let  $0\to K \to P \to N \to0$ be a short exact sequence with $P$ projective and consider the following commutative diagram with exact rows $$\xymatrix{ & 0\ar[d] & 0\ar[d] & 0\ar[d] & \\ 0 \ar[r] & Z_{0}(\Hom^{\bullet}(M,K)) \ar[r] \ar[d] & \Hom^{\bullet}(M,K)_0 \ar[r] \ar[d] & B_{-1}(\Hom^{\bullet}(M,K)) \ar[r] \ar[d] & 0 \\ 0 \ar[r] & Z_{0}(\Hom^{\bullet}(M,P)) \ar[r] \ar[d] & \Hom^{\bullet}(M,P)_0 \ar[r] \ar[d] & B_{-1}(\Hom^{\bullet}(M,P)) \ar[r] \ar[d] & 0 \\ 0 \ar[r] & Z_{0}(\Hom^{\bullet}(M,N)) \ar[r] \ar[d] & \Hom^{\bullet}(M,N)_0 \ar[r] \ar[d] & B_{-1}(\Hom^{\bullet}(M,N)) \ar[r] \ar[d] & 0 \\ & 0 & 0 & 0}$$

The first and second columns are exact since $N \in {\underline {\mathfrak{Pr}}}_{\C(R)}^{-1}(M)$ and $N_n \in {\underline {\mathfrak{Pr}}}_{R\Mod}^{-1}(M_n)$ for every $n\in \mathbb{Z}$, respectively. Hence, the third column is also exact. 

Now, applying the Snake Lemma to the following commutative diagram with exact rows and columns $$\xymatrix{ & 0\ar[d] & 0\ar[d] & 0\ar[d] & \\ 0\ar[r] & B_{-1}(\Hom^{\bullet}(M,K))\ar[r]\ar[d] & B_{-1}(\Hom^{\bullet}(M,P))\ar[r] \ar[d] & B_{-1}(\Hom^{\bullet} (M,N)) \ar[r]\ar[d] & 0 \\ 0\ar[r] & Z_{-1}(\Hom^{\bullet} (M,K))\ar[r]& Z_{-1}(\Hom^{\bullet} (M,P))\ar[r]& Z_{-1}(\Hom^{\bullet} (M,N)) &}$$ we get the exact sequence $$\xymatrix{ 0\ar[r]& H_{-1}(\Hom^{\bullet} (M,K))\ar[r]& H_{-1}(\Hom^{\bullet} (M,P))\ar[r]& H_{-1}(\Hom^{\bullet} (M,N))&}.$$

By \cite[Corollary 3.5]{Gillespie2} and since $$H_{-1}(\Hom^{\bullet} (M,P))=\Hom_{\K(R)}(M[-1],P)=0,$$  we get that   $\Hom_{\K(R)}(M[-1],K)=H_{-1}(\Hom^{\bullet} (M,K))=0$.

$2.\Rightarrow 3.$ Clear since the category of complexes has enough projectives.

$3.\Rightarrow 4.$ This is clear since every projective complex belongs to ${\underline {\mathfrak{Pr}}}_{\C(R)}^{-1}(M)$.

$4.\Rightarrow 1.$ Let $0\to  K\to P\to N \to 0$ be the short exact sequence of statement 4., $f:M\to N$ be any morphism of complexes and consider the following pullback diagram $$\xymatrix{ 0\ar[r]& K\ar[r]\ar@{=}[d]& D\ar[r] \ar[d]& M \ar[r]\ar[d]^f & 0 \\	0\ar[r]& K\ar[r]& P\ar[r]& N \ar[r] & 0}$$

For every $n\in \mathbb{Z}$, $D_n$ is a pullback by Lemma \ref{lem-pull}, so by assumption and Proposition \ref{prop-pull} the short exact sequence $ 0\to K\to D\to M \to 0$ splits at the module level. Then, this sequence is equivalent to a short exact sequence $0\to K\to M(g)\to M \to 0 $ being $M(g)$ the mapping cone of a morphism $g:M[-1] \to K$ (see \cite[Section 3.3]{Enochs}). But $g: M[-1] \to K$ is null-homotopic by assumption so $0\to K\to M(g)\to M \to 0$ splits (see \cite[Proposition 3.3.2]{Enochs}). Therefore, the sequence $ 0\to K\to D\to M \to 0$ splits too and then $N\in{\underline {\mathfrak{Pr}}}_{\C(R)}^{-1}(M)$ by Proposition \ref{prop-pull}.
\end{proof}

\begin{rem}\label{rem-cond1} It is natural to ask whether,  
as in  the case of exact sequences $0\to K \to P \to N \to0$ with  $P$ projective,   the statements of Theorem \ref{thm-4-2} are   equivalent to the following: ``For every short exact sequence $0\to K \to P \to N \to0$ with  $ P \in {\underline {\mathfrak{Pr}}}_{\C(R)}^{-1}( M)$, the equation $\Hom_{\K(R)}(M[-1],K)=0$ holds''.   We will see in  Example    \ref{exmp-2-spherR}  that  they are not equivalent.
\end{rem}

Given two complexes $M$ and $N$, it is natural to ask if $ N \in {\underline {\mathfrak{Pr}}}_{\C(R)}^{-1}(M)$ is sufficient to get that, for every $n \in \mathbb{Z}$, $N_{n} \in {\underline {\mathfrak{Pr}}}_{R\Mod}^{-1}(M_n)$. This is not true in general. Indeed, we can always consider two modules $X$ and $Y$ with $Y\notin {\underline {\mathfrak{Pr}}}_{R\Mod}^{-1}(Y)$, while it is clear that we always have $\underline{Y} \in {\underline {\mathfrak{Pr}}}_{\C(R)}^{-1}(\overline{X})$ since every morphism $\overline{X} \to \underline{Y}$ is zero. Nevertheless, the answer to the question would be positive if we assume, furthermore, that $N$ belongs to ${\underline {\mathfrak{Pr}}}_{\C(R)}^{-1}(M[-1])$.

\begin{prop}\label{prop-compon}
Let $M$ and $N$ be two complexes. If $ N \in {\underline {\mathfrak{Pr}}}_{\C(R)}^{-1}( M[-1]) \bigcap {\underline {\mathfrak{Pr}}}_{\C(R)}^{-1}(M)$, then  $N_{n} \in {\underline {\mathfrak{Pr}}}_{R\Mod}^{-1}(M_n)$ for every $n \in \mathbb{Z}$.
\end{prop}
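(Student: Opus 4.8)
The plan is to reduce the degreewise statement to a single subprojectivity relation in $\C(R)$ that packages all degrees at once, and then to read it off from the two hypotheses through the contractible complex $\mathrm{cone}(\mathrm{id}_{M[-1]})$.

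First I would record the elementary translation between module maps and maps out of a disc. For a fixed $n$, sending $f\colon M_n\to N_n$ to the chain map $\overline{M_n}[n-1]\to N$ which is $f$ in degree $n$ (and is then forced to be $\pm d_n^N f$ in degree $n-1$) sets up a bijection $\Hom_R(M_n,N_n)\cong\Hom_{\C(R)}(\overline{M_n}[n-1],N)$. A routine check, using that $\overline{Q}[n-1]$ is projective whenever $Q$ is a projective module and that every projective complex is a direct sum of such discs, shows that $f$ factors through a projective module if and only if the associated chain map factors through a projective complex. Hence $N_n\in{\underline {\mathfrak{Pr}}}_{R\Mod}^{-1}(M_n)$ if and only if $N\in{\underline {\mathfrak{Pr}}}_{\C(R)}^{-1}(\overline{M_n}[n-1])$, and it suffices to prove the latter for every $n$.

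Next I would assemble these discs. Grouped degreewise, $\bigoplus_{n\in\mathbb{Z}}\overline{M_n}[n-1]$ is isomorphic to the contractible complex $\mathrm{cone}(\mathrm{id}_{M[-1]})$, and since a morphism out of a direct sum factors through a projective precisely when each of its restrictions does (direct sums of projectives being projective), one has $N\in{\underline {\mathfrak{Pr}}}_{\C(R)}^{-1}(\overline{M_n}[n-1])$ for all $n$ if and only if $N\in{\underline {\mathfrak{Pr}}}_{\C(R)}^{-1}(\mathrm{cone}(\mathrm{id}_{M[-1]}))$. The point of this repackaging is that $\mathrm{cone}(\mathrm{id}_{M[-1]})$ sits in the degreewise split short exact sequence $0\to M[-1]\to\mathrm{cone}(\mathrm{id}_{M[-1]})\to M\to 0$ whose two end terms are precisely the complexes appearing in the hypothesis.

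Finally, given a morphism $\phi\colon\mathrm{cone}(\mathrm{id}_{M[-1]})\to N$, I would split it along this sequence. Its restriction $\phi^A$ to $M[-1]$ is a genuine chain map and factors through a projective complex $Q$ by $N\in{\underline {\mathfrak{Pr}}}_{\C(R)}^{-1}(M[-1])$; choosing a contraction of the contractible complex then corrects the $M$-component of $\phi$ into a genuine chain map $M\to N$, which factors through a projective complex by $N\in{\underline {\mathfrak{Pr}}}_{\C(R)}^{-1}(M)$. Using functoriality of $\mathrm{cone}(\mathrm{id}_{-})$ together with the fact that $\mathrm{cone}(\mathrm{id}_Q)$ is again projective, these two factorizations should be assembled into a single factorization of $\phi$ through a projective complex. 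Equivalently, one can argue concretely with the tools at hand: form the pullback of a projective presentation of $N$ along $\overline{M_n}[n-1]\to N$, which by Lemma \ref{lem-pull} is the degreewise pullback and whose degree-$n$ piece is exactly the module pullback attached to $f$; the first hypothesis supplies, via Proposition \ref{prop-pull} applied to $M[-1]$ and the canonical map $M[-1]\to\overline{M_n}[n-1]$, a section in degree $n-1$, while the second hypothesis must supply the section in degree $n$. I expect this last point---transferring subprojectivity across the (non-split, but contractible) cone extension so as to produce the degree-$n$ section---to be the main obstacle; it is exactly here that both hypotheses, and the contractibility of the cone, are indispensable (a single hypothesis cannot suffice, as the instance $\underline{Y}\in{\underline {\mathfrak{Pr}}}_{\C(R)}^{-1}(\overline{X})$ already shows).
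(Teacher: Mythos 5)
Your two reductions are correct but they do not advance the proof: they only restate the conclusion in an equivalent form. Indeed, the identification $\Hom_R(M_n,N_n)\cong\Hom_{\C(R)}(\overline{M_n}[n-1],N)$, the compatibility of factorizations through projectives, and the isomorphism $\bigoplus_n\overline{M_n}[n-1]\cong \mathrm{cone}(\mathrm{id}_{M[-1]})$ are all fine, so the target $N\in{\underline {\mathfrak{Pr}}}_{\C(R)}^{-1}(\mathrm{cone}(\mathrm{id}_{M[-1]}))$ is genuinely equivalent to the statement. But the entire content of the proposition is then concentrated in the step you leave open, namely deducing this from $N\in{\underline {\mathfrak{Pr}}}_{\C(R)}^{-1}(M[-1])\cap{\underline {\mathfrak{Pr}}}_{\C(R)}^{-1}(M)$ via the extension $0\to M[-1]\to \mathrm{cone}(\mathrm{id}_{M[-1]})\to M\to 0$, and you explicitly flag that you do not know how to do it (``should be assembled'', ``I expect this last point to be the main obstacle''). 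The difficulty is real: a morphism $\phi$ out of the cone corresponds to an arbitrary family $u_n\colon M_{n+1}\to N_{n+1}$ which is not a chain map, its restriction to $M[-1]$ is the null-homotopic chain map $d^Nu+ud^{M[-1]}$, and $\phi$ induces no chain map $M\to N$ at all unless that restriction vanishes; so there is no evident ``second factorization'' to assemble, and the cone-functoriality idea requires $u_n$ to factor through the chosen factorization of $\phi|_{M[-1]}$, which there is no reason to expect. If one instead tries to chase the obstruction in $\Ext^1_{\C(R)}$ along the cone extension (your ``concrete'' variant), one finds that $\Ext^1_{\C(R)}(\mathrm{cone}(\mathrm{id}_{M[-1]}),K)\cong\prod_n\Ext^1_R(M_{n+1},K_{n+1})$ and the class to be killed is exactly the tuple of degreewise obstructions, i.e.\ the original statement; the chase is circular. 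Note also that the class of $N$-subprojective sources is not known to be closed under (even degreewise split) extensions, so no general principle rescues the step.

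The ingredient you are missing is the one the paper uses: by \cite[Corollary 3.5]{Gillespie2}, factoring through a projective complex forces null-homotopy, so the first hypothesis gives $\Hom_{\K(R)}(M[-1],P)=\Hom_{\K(R)}(M[-1],N)=0$ for a projective presentation $P\to N$, whence $B_{-1}(\Hom^{\bullet}(M,P))\to Z_{-1}(\Hom^{\bullet}(M,P))$ and $B_{-1}(\Hom^{\bullet}(M,N))\to Z_{-1}(\Hom^{\bullet}(M,N))$ are isomorphisms. Since the first hypothesis also makes $Z_{-1}(\Hom^{\bullet}(M,P))\to Z_{-1}(\Hom^{\bullet}(M,N))$ surjective, the induced map on $B_{-1}$ is surjective, and the second hypothesis makes $Z_{0}(\Hom^{\bullet}(M,P))\to Z_{0}(\Hom^{\bullet}(M,N))$ surjective; the short exact sequences $0\to Z_0\to\Hom^{\bullet}(M,-)_0\to B_{-1}\to 0$ then force $\prod_i\Hom_R(M_i,P_i)\to\prod_i\Hom_R(M_i,N_i)$ to be surjective, which is exactly the degreewise conclusion. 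Your sketch never exploits the passage to the homotopy category, and without it the two hypotheses cannot be combined along the lines you propose.
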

\begin{proof}
Let $P$ be a projective complex and $P\to N$ be an epimorphism of complexes. Since  $ N, P \in {\underline {\mathfrak{Pr}}}_{\C(R)}^{-1}(M[-1])$, $\Hom_{\K(R)}(M[-1],P)=\Hom_{\K(R)}(M[-1],N)=0$ by \cite[Corollary 3.5]{Gillespie2}. So, the horizontal maps of the following commutative diagram are isomorphisms $$\xymatrix{  B_{-1}(\Hom^{\bullet} (M,P)) \ar[r] \ar[d] & Z_{-1}(\Hom^{\bullet} (M,P)) \ar[d] \\ B_{-1}(\Hom^{\bullet} (M,N)) \ar[r] & Z_{-1}(\Hom^{\bullet} (M,N))}$$

The morphism $$Z_{-1}(\Hom^{\bullet} (M,P)) \to Z_{-1}(\Hom^{\bullet} (M,N))$$ is epic since $ N \in {\underline {\mathfrak{Pr}}}_{\C(R)}^{-1}(M[-1])$, so the map $B_{-1}(\Hom^{\bullet} (M,P)) \to B_{-1}(\Hom^{\bullet} (M,N))$ must also be epic. 

Now, consider the following commutative diagram with exact rows: $$\xymatrix{0 \ar[r] & Z_{0}(\Hom^{\bullet} (M,P)) \ar[r] \ar[d]  & \Hom^{\bullet} (M,P)_0 \ar[r] \ar[d] & B_{-1}(\Hom^{\bullet} (M,P)) \ar[r] \ar[d]& 0 \\ 0 \ar[r]& Z_0(\Hom^{\bullet} (M,N)) \ar[r]  & \Hom^{\bullet} (M,N)_0 \ar[r] & B_{-1}(\Hom^{\bullet} (M,N)) \ar[r] & 0}$$

The map $Z_0(\Hom^{\bullet} (M,P)) \to Z_0(\Hom^{\bullet} (M,N))$ is epic since $ N \in {\underline {\mathfrak{Pr}}}_{\C(R)}^{-1}(M)$, so again $\Hom^{\bullet} (M,P)_0 \to \Hom^{\bullet} (M,N)_0$ is epic so we see that every morphism $M_n \to N_n$ factors through $P_n$ for every $n \in \mathbb{Z}$.
\end{proof} 

Though the fact that a complex $N$ belongs to the subprojectivity domain of another complex $M$ does not imply that the components of $N$ necessarily belong to the subprojectivity domains of the components of $M$, the answer is completely different if we ask about cycles of $N$ instead of components of $N$. We can see this in the following result.

\begin{lem}\label{lem-sph} 
Let $n\in \mathbb{Z}$, $N$ be a complex and $M$ be a module. If $N\in{\underline {\mathfrak{Pr}}}_{\C(R)}^{-1}(\underline{M}[n])$, then $Z_n(N) \in  {\underline {\mathfrak{Pr}}}_{R\Mod}^{-1}(M)$.
\end{lem}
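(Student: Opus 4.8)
The plan is to transfer the required factorization from the complex level down to the cycle level, using a natural identification of Hom-groups together with the structure of projective complexes.

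First I would record the basic identification that drives everything: for any complex $N$, any module $M$ and any $n\in\mathbb{Z}$, there is a natural isomorphism $\Hom_{\C(R)}(\underline{M}[n],N)\cong \Hom_R(M,Z_n(N))$. Indeed, $\underline{M}[n]$ is concentrated in degree $n$ (with value $M$ and all differentials zero), so a chain map $f:\underline{M}[n]\to N$ is determined by its single possibly nonzero component $f_n:M\to N_n$; the chain condition in degree $n$ reads $d_n^N f_n=0$, which says exactly that $f_n$ factors through $Z_n(N)=\Ker\, d_n^N$. Thus morphisms $\underline{M}[n]\to N$ correspond bijectively to module morphisms $M\to Z_n(N)$, and the remaining chain conditions are automatic.

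Next, to prove $Z_n(N)\in{\underline {\mathfrak{Pr}}}_{R\Mod}^{-1}(M)$, I would take an arbitrary module morphism $\alpha:M\to Z_n(N)$ and show it factors through a projective module. Via the identification above, $\alpha$ corresponds to a chain map $f:\underline{M}[n]\to N$. Since $N\in{\underline {\mathfrak{Pr}}}_{\C(R)}^{-1}(\underline{M}[n])$, the map $f$ factors through a projective complex $P$, say $f=\beta\gamma$ with $\gamma:\underline{M}[n]\to P$ and $\beta:P\to N$. Because $\gamma$ is again a chain map out of $\underline{M}[n]$, its degree-$n$ component $\gamma_n$ lands in $Z_n(P)$, and since $\beta$ is a chain map it restricts to a morphism $\beta_n|_{Z_n(P)}:Z_n(P)\to Z_n(N)$. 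Reading off degree $n$ gives $\alpha=\bigl(\beta_n|_{Z_n(P)}\bigr)\circ\gamma_n$, so $\alpha$ factors through the module $Z_n(P)$.

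It then remains to see that $Z_n(P)$ is a projective module whenever $P$ is a projective complex, and this is the one step where I expect the actual content to sit. I would settle it by recalling the structure of projective objects of $\C(R)$: such a $P$ is of the form $\bigoplus_{k\in\mathbb{Z}}\overline{Q_k}[k]$ with every $Q_k$ projective. A direct computation of the cycles of a shifted disc complex gives $Z_n(\overline{Q_k}[k])=Q_k$ when $k=n$ and $0$ otherwise, so that $Z_n(P)\cong Q_n$ is projective; equivalently, one may note that contractibility of $P$ splits the short exact sequence $0\to Z_n(P)\to P_n\to B_{n-1}(P)\to 0$, exhibiting $Z_n(P)$ as a direct summand of the projective module $P_n$. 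Either way, $Z_n(P)$ is projective, and combined with the factorization $\alpha=\bigl(\beta_n|_{Z_n(P)}\bigr)\circ\gamma_n$ this shows that every $\alpha:M\to Z_n(N)$ factors through a projective module. Hence $M$ is $Z_n(N)$-subprojective, that is, $Z_n(N)\in{\underline {\mathfrak{Pr}}}_{R\Mod}^{-1}(M)$, as claimed.
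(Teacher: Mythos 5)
Your proof is correct and follows essentially the same route as the paper's: lift a module morphism $M\to Z_n(N)$ to a chain map $\underline{M}[n]\to N$, factor it through a projective complex $P$, and observe that the degree-$n$ component factors through $Z_n(P)$. The only difference is that you explicitly justify why $Z_n(P)$ is a projective module (via the structure of projective complexes, or the splitting of $0\to Z_n(P)\to P_n\to B_{n-1}(P)\to 0$), a point the paper's proof uses without comment; this is a welcome addition but not a change of method.
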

\begin{proof} Let $f:M\to Z_n(N)$ be any morphism of modules and $\underline{f}:\underline{M}[n] \to N$ be the induced morphism of complexes. By assumption $\underline{f}$ factors as $$\xymatrix{\underline{M}[n] \ar[r]_{\alpha} \ar@/^1pc/[rr]^{\underline{f}} & P \ar[r]_{\beta} & N}$$ for some projective complex $P$. Then, $d_n^P \alpha_n=0$, so there exists a morphism $h:M \to Z_n(P)$ such that $\mu_n^P h =\alpha_n$. 

On the other side, the morphism $\beta$ induces a morphism $g:Z_n(P) \to Z_n(N)$ such that $\mu_n^N g=\beta_n \mu_n^P$. Then, we have $$\mu_n^N gh= \beta_n \mu_n^Ph=\beta_n \alpha_n= \underline{f}_n=\mu_n^N f,$$ that is, $f=gh$, so $f$ factors through the projective module $Z_n(P)$.
\end{proof}

Another natural question at this point is whether the inverse implication of Proposition \ref{prop-compon} is true or not. Namely, given two complexes $M$ and $N$, is the condition ``$N_{n} \in {\underline {\mathfrak{Pr}}}_{R\Mod}^{-1}(M_n)$ for every $n \in \mathbb{Z}$", sufficient to assure that $ N \in {\underline {\mathfrak{Pr}}}_{\C(R)}^{-1}(M)$? Again, this is not true in general since, for instance, for exact complexes it only holds over left hereditary rings (see Proposition \ref{prop-hered}).

\bigskip
We have studied so far the relation between subprojectivity and null-homotopic morphisms involving kernels of epimorphisms. We will now see that this relation can also be described without considering such kernels (Theorem \ref{thm-4-1}).\\

We start by characterizing contractible complexes in terms of subprojectivity. We need the following lemma.

\begin{lem}\label{lem-1}
Let $M$ be a complex, $N$ be a module and $n \in \mathbb{Z}$. Then, $\overline{N}[n] \in {\underline {\mathfrak{Pr}}}_{\C(R)}^{-1}(M)$ if and only if $N \in {\underline {\mathfrak{Pr}}}_{R\Mod}^{-1}(M_n)$.
\end{lem}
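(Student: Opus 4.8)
The plan is to reduce the whole statement to an elementary correspondence between morphisms of complexes $M\to\overline{N}[n]$ and module morphisms $M_n\to N$. Recall that $\overline{N}[n]$ is the complex with $N$ in degrees $n+1$ and $n$, the differential $(\overline{N}[n])_{n+1}\to(\overline{N}[n])_n$ being $(-1)^n\id_N$, and all other terms zero. So a morphism $\phi:M\to\overline{N}[n]$ has at most two nonzero components $\phi_{n+1}:M_{n+1}\to N$ and $\phi_n:M_n\to N$, and its only nontrivial commutativity condition, the square in degree $n+1$, reads $(-1)^n\phi_{n+1}=\phi_n d^M_{n+1}$. First I would check that this forces $\phi_{n+1}=(-1)^n\phi_n d^M_{n+1}$ and that, conversely, for \emph{any} module map $f:M_n\to N$ the family with $\phi_n=f$, $\phi_{n+1}=(-1)^n f d^M_{n+1}$, and $\phi_i=0$ otherwise is a genuine morphism of complexes (the remaining squares vanish because $d^M_{n+1}d^M_{n+2}=0$). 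This yields a bijection $\Hom_{\C(R)}(M,\overline{N}[n])\cong\Hom_R(M_n,N)$, $\phi\mapsto\phi_n$.

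For the direct implication, assume $\overline{N}[n]\in{\underline{\mathfrak{Pr}}}_{\C(R)}^{-1}(M)$ and let $f:M_n\to N$ be arbitrary. Using the bijection I would lift $f$ to $\overline{f}:M\to\overline{N}[n]$ with $\overline{f}_n=f$. By hypothesis $\overline{f}$ factors through a projective complex $P$, say $\overline{f}=\beta\alpha$ with $\alpha:M\to P$ and $\beta:P\to\overline{N}[n]$; taking degree-$n$ components gives $f=\beta_n\alpha_n$. Since the components of a projective complex are projective modules, $P_n$ is projective, so $f$ factors through a projective module and $N\in{\underline{\mathfrak{Pr}}}_{R\Mod}^{-1}(M_n)$.

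For the converse, assume $N\in{\underline{\mathfrak{Pr}}}_{R\Mod}^{-1}(M_n)$ and let $\phi:M\to\overline{N}[n]$ be arbitrary. Its degree-$n$ component $\phi_n:M_n\to N$ factors through a projective module $Q$, say $\phi_n=gf$ with $f:M_n\to Q$ and $g:Q\to N$. I would then promote these to complex maps through $\overline{Q}[n]$: let $\overline{f}:M\to\overline{Q}[n]$ be the morphism associated to $f$ by the bijection, and let $\overline{g}:\overline{Q}[n]\to\overline{N}[n]$ be the map with $\overline{g}_{n+1}=\overline{g}_n=g$ (a chain map since $(-1)^n g=g(-1)^n$). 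A short component-wise check — degree $n$ gives $gf=\phi_n$, degree $n+1$ gives $(-1)^n gf d^M_{n+1}=\phi_{n+1}$ — shows $\overline{g}\,\overline{f}=\phi$. As $\overline{Q}[n]$ is a disc complex on a projective module, it is a projective object of $\C(R)$, so $\phi$ factors through a projective complex and $\overline{N}[n]\in{\underline{\mathfrak{Pr}}}_{\C(R)}^{-1}(M)$.

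The argument is essentially bookkeeping; the two points that need care are the sign $(-1)^n$ coming from the shift (which must be tracked consistently so that the constructed families really are chain maps and so that $\overline{g}\,\overline{f}=\phi$ holds on the nose), and the two structural facts I rely on: that every component of a projective complex is a projective module, and that the disc complex $\overline{Q}[n]$ on a projective module $Q$ is projective in $\C(R)$. The latter is the only nonformal input, and I would cite it from the standard theory of complexes (e.g. \cite{Enochs}).
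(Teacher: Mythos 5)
Your proof is correct and follows essentially the same route as the paper's: lift a module map $f:M_n\to N$ to the induced chain map for the forward direction, and for the converse factor $\phi_n$ through a projective module $Q$ and assemble the factorization through the projective disc complex $\overline{Q}[n]$. The only difference is that you track the sign $(-1)^n$ coming from the shift explicitly, which the paper suppresses; this is a point in your favour but does not change the argument.
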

\begin{proof}
Suppose that $\overline{N}[n] \in {\underline {\mathfrak{Pr}}}_{\C(R)}^{-1}(M)$ and let $f:M_n \to N$ be a morphism of modules.  The induced morphism $\overline{f}:M \to \overline{N}[n]$ (that is, $\overline{f}_n=f$) factors through a projective complex $P$ by the hypothesis, so $f$ factors through the projective module $P_n$.

Conversely, let $f:M \to \overline{N}[n]$ be a morphism of complexes. Since $N \in {\underline {\mathfrak{Pr}}}_{R\Mod}^{-1}(M_n)$, the module morphism $f_n$ factors as $$\xymatrix{M_n \ar[r]_{\alpha} \ar@/^1pc/[rr]^{f_n} & P \ar[r]_{\beta} & N}$$ for some projective module $P$. Then, if we let $g:M \to \overline{P}[n]$ be the morphism of complexes with $g_n=\alpha$ and $g_{n+1}=\alpha d^M_{n+1}$, and $h: \overline{P}[n] \to \overline{N}[n]$ be the morphism of complexes with $h_n=h_{n+1}=\beta$, we clearly get that $f=hg$, hence $\overline{N}[n] \in {\underline {\mathfrak{Pr}}}_{\C(R)}^{-1}(M)$.
\end{proof}

\begin{prop}\label{prop-1}
Let $M$ be a complex and $(N_n)_{n \in \mathbb{Z}}$ be a family of modules. Then, $\oplus_{n \in \mathbb{Z}} \overline{N_n}[n] \in {\underline {\mathfrak{Pr}}}_{\C(R)}^{-1}(M)$ if and only if $N_n \in {\underline {\mathfrak{Pr}}}_{R\Mod}^{-1}(M_n)$ for every $n \in \mathbb{Z}$.
\end{prop}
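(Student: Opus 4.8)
The plan is to treat the two implications separately, using Lemma \ref{lem-1} as the single-summand model. First I would record the structure of $C:=\bigoplus_{n\in\mathbb{Z}}\overline{N_n}[n]$: it is a contractible complex whose term in degree $i$ is the \emph{finite} direct sum $C_i=N_i\oplus N_{i-1}$, the first summand being the ``lower'' copy of $\overline{N_i}[i]$ and the second the ``upper'' copy of $\overline{N_{i-1}}[i-1]$, while the differential $d^C_i\colon N_i\oplus N_{i-1}\to N_{i-1}\oplus N_{i-2}$ sends $(x,y)$ to $(\pm y,0)$. In particular each $\overline{N_n}[n]$ is a direct summand of $C$.

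For the implication ``$\bigoplus_{n}\overline{N_n}[n]\in{\underline{\mathfrak{Pr}}}_{\C(R)}^{-1}(M)\Rightarrow N_n\in{\underline{\mathfrak{Pr}}}_{R\Mod}^{-1}(M_n)$'' I would use that subprojectivity domains are closed under direct summands: if $N'$ is a summand of $N\in{\underline{\mathfrak{Pr}}}_{\mathscr{A}}^{-1}(M)$ with inclusion $\iota$ and retraction $\pi$, then any $f\colon M\to N'$ gives $\iota f=\beta\alpha$ through a projective, whence $f=\pi\iota f=(\pi\beta)\alpha$ also factors through a projective. Applying this to the summand $\overline{N_n}[n]$ of $C$ gives $\overline{N_n}[n]\in{\underline{\mathfrak{Pr}}}_{\C(R)}^{-1}(M)$, and Lemma \ref{lem-1} then yields $N_n\in{\underline{\mathfrak{Pr}}}_{R\Mod}^{-1}(M_n)$ for every $n$.

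For the converse I would argue directly. A morphism $f\colon M\to C$ is given in degree $i$ by $f_i=(a_i,b_i)$ with $a_i\colon M_i\to N_i$ and $b_i\colon M_i\to N_{i-1}$, and the chain-map identity $d^C_if_i=f_{i-1}d^M_i$ forces $b_i=\pm\,a_{i-1}d^M_i$ (the remaining condition $b_{i-1}d^M_i=0$ is then automatic from $d^Md^M=0$). Using $N_i\in{\underline{\mathfrak{Pr}}}_{R\Mod}^{-1}(M_i)$, I factor each $a_i$ as $a_i=\gamma_i\delta_i$ with $\delta_i\colon M_i\to P_i$, $\gamma_i\colon P_i\to N_i$ and $P_i$ projective. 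Setting $Q:=\bigoplus_{i}\overline{P_i}[i]$, which is projective since each disc complex $\overline{P_i}[i]$ on a projective module is projective and projectivity is preserved by direct sums, I then mimic the construction of Lemma \ref{lem-1} degree by degree: define $g\colon M\to Q$ by taking the lower component of $g_i$ to be $\delta_i$ and the upper one to be $\pm\,\delta_{i-1}d^M_i$, and define $h\colon Q\to C$ in degree $i$ by the diagonal $\gamma_i\oplus\gamma_{i-1}$. A routine verification (identical in shape to Lemma \ref{lem-1}) shows that $g$ and $h$ are chain maps and that $hg=f$, so $f$ factors through the projective complex $Q$.

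The step I expect to be the main obstacle is precisely this converse, and the subtle point is the infinite direct sum. One cannot simply glue together the single-summand factorizations that Lemma \ref{lem-1} provides for the projections $\pi_nf\colon M\to\overline{N_n}[n]$, because subprojectivity domains need not be closed under arbitrary direct sums and the assembled map into $\bigoplus_n P^{(n)}$ may fail to have finite support. The construction above circumvents this by building $g$ and $h$ uniformly across all degrees at once; it succeeds exactly because $C$ and $Q$ are finite in each degree, so $g_i$ lands in the finite sum $P_i\oplus P_{i-1}$ with no support issue, and because the differential couples only adjacent degrees, which is what makes $g$ a chain map.
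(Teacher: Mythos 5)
Your proof is correct. The forward direction coincides with the paper's argument: closure of subprojectivity domains under direct summands (which you reprove inline, while the paper cites it) followed by Lemma \ref{lem-1}. The converse is where you genuinely diverge. You write a morphism $f\colon M\to\bigoplus_{n}\overline{N_n}[n]$ in explicit matrix form $f_i=(a_i,b_i)$ on the degreewise decomposition $N_i\oplus N_{i-1}$, observe that the chain-map identity forces $b_i=\pm a_{i-1}d^M_i$, factor each $a_i$ through a projective module $P_i$, and assemble the two chain maps $g$ and $h$ by hand in the style of the proof of Lemma \ref{lem-1}. The paper instead stays at the level of complexes: for each $m$ it uses the lifting form of subprojectivity to produce $h^m\colon M\to\overline{P_m}[m]$ with $\pi^mf=g^mh^m$ for a chosen epimorphism $g^m\colon\overline{P_m}[m]\to\overline{N_m}[m]$, and then glues the family $(h^m)$ into a single morphism $M\to\bigoplus_n\overline{P_n}[n]$ by noting that this direct sum coincides with the direct product, since only two summands are nonzero in each degree. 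So the gluing of single-summand factorizations that you declare impossible is exactly what the paper does; the support obstruction you raise is dissolved by the same degreewise finiteness that makes your own $g_i$ land in $P_i\oplus P_{i-1}$. Both arguments therefore rest on the same key structural fact about sums of disc complexes; yours is more computational but entirely self-contained, while the paper's is shorter and reuses Lemma \ref{lem-1} together with the universal property of the product.
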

\begin{proof} If $\oplus_{n \in \mathbb{Z}} \overline{N_n}[n] \in {\underline {\mathfrak{Pr}}}_{\C(R)}^{-1}(M)$ then $ \overline{N_n}[n] \in {\underline {\mathfrak{Pr}}}_{\C(R)}^{-1}(M)$ for every $n \in \mathbb{Z}$ since ${\underline {\mathfrak{Pr}}}_{\C(R)}^{-1}(M)$ is closed under direct summands (see\cite[Proposition 3.1]{SubprojAb}). Then, by Lemma \ref{lem-1} we get that for every $n\in\mathbb{Z}$, $N_n \in {\underline {\mathfrak{Pr}}}_{R\Mod}^{-1}(M_n)$.

Conversely, if $N_n \in {\underline {\mathfrak{Pr}}}_{R\Mod}^{-1}(M_n)$ for every $n \in \mathbb{Z}$ then $\overline{N_n}[n] \in {\underline {\mathfrak{Pr}}}_{\C(R)}^{-1}(M)$ for every $n \in \mathbb{Z}$ again by  Lemma \ref{lem-1}.

Now, let $f:M \to \oplus_{n \in \mathbb{Z}} \overline{N_n}[n]$ be a morphism of complexes and, for every $m$, choose an epimorphism $g^m: \overline{P_m}[m] \to \overline{N_m}[m]$ with $P_m$ a projective module.

If we let $$\pi^m : \oplus_{n \in \mathbb{Z}} \overline{N_n}[n] \to \overline{N_m}[m]$$ be the projection morphism, for any $m$  there exists a morphism $h^m:M \to \overline{P_m}[m]$ such that $\pi^m f=g^m h^m$.

But $\oplus_{n \in \mathbb{Z}} \overline{P_n}[n]$ coincides with $\prod_{n \in \mathbb{Z}} \overline{P_n}[n]$, so if we call $$\pi'^m : \oplus_{n \in \mathbb{Z}} \overline{P_n}[n] \to \overline{P_m}[m]$$ the projection morphism, we get a morphism $h :M \to \oplus_{n \in \mathbb{Z}} \overline{P_n}[n]$ such that $\pi'^m h=h^m$ for every $m$.

Therefore, for every $m \in\mathbb{Z}$ we have $$\pi^m f= g^m h^m =g^m\pi'^m h=\pi^m (\oplus g^n) h$$ so we see that $f= (\oplus g^n) h$. This means that $f$ factors through the projective complex $\oplus_{n \in \mathbb{Z}} \overline{P_n}[n]$ and so that $\oplus_{n \in \mathbb{Z}} \overline{N_n}[n] \in {\underline {\mathfrak{Pr}}}_{\C(R)}^{-1}(M)$.
\end{proof}
	
The following result characterizes subprojectivity in terms of factorization of morphisms through contractible complexes and through complexes in subprojectivity domains.  

\begin{prop}\label{pro-cont2}
Let $M$ and $N$ be two complexes. The following conditions are equivalent.
\begin{enumerate}
\item $N\in{\underline {\mathfrak{Pr}}}_{\C(R)}^{-1}(M)$.
\item Every morphism $M \to N$ factors through a complex of   ${\underline {\mathfrak{Pr}}}_{\C(R)}^{-1}(M)$. 
\item Every morphism $M \to N$ factors through a contractible complex $\oplus_{n \in \mathbb{Z}} \overline{X_n}[n]$ such that $X_n \in {\underline {\mathfrak{Pr}}}_{R\Mod}^{-1}(M_n)$ for every $n \in \mathbb{Z}$.
\end{enumerate}
\end{prop}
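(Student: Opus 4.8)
The plan is to prove the cycle $1 \Rightarrow 3 \Rightarrow 2 \Rightarrow 1$, using throughout the characterization recalled in the Preliminaries (from \cite[Proposition 2.7]{SubprojAb}) that $N \in {\underline {\mathfrak{Pr}}}_{\C(R)}^{-1}(M)$ holds exactly when every morphism $M \to N$ factors through a projective complex, together with Proposition \ref{prop-1} and the structural description of the projective objects of $\C(R)$.

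For $1 \Rightarrow 3$ I would start from an arbitrary morphism $f : M \to N$ and use hypothesis 1 to factor it through a projective complex $P$. Recalling that the projective objects of $\C(R)$ are precisely the contractible complexes with projective components, i.e. the complexes $\oplus_{n\in\mathbb{Z}} \overline{Q_n}[n]$ with each $Q_n$ a projective module, this already exhibits $f$ as factoring through a contractible complex $\oplus_{n\in\mathbb{Z}}\overline{X_n}[n]$ with $X_n = Q_n$. Since every projective module lies in every module subprojectivity domain, we get $X_n \in {\underline {\mathfrak{Pr}}}_{R\Mod}^{-1}(M_n)$ for all $n$, which is exactly statement 3.

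For $3 \Rightarrow 2$ the work is already contained in Proposition \ref{prop-1}: the condition $X_n \in {\underline {\mathfrak{Pr}}}_{R\Mod}^{-1}(M_n)$ for every $n$ is equivalent to $\oplus_{n\in\mathbb{Z}}\overline{X_n}[n] \in {\underline {\mathfrak{Pr}}}_{\C(R)}^{-1}(M)$, so the contractible complex through which each $f$ factors is itself a member of ${\underline {\mathfrak{Pr}}}_{\C(R)}^{-1}(M)$, which is statement 2. For $2 \Rightarrow 1$ I would run a two-step (transitivity) factorization: given $f : M \to N$, write $f = \beta \alpha$ with $\alpha : M \to C$, $\beta : C \to N$ and $C \in {\underline {\mathfrak{Pr}}}_{\C(R)}^{-1}(M)$; since $C$ lies in the subprojectivity domain of $M$, the morphism $\alpha$ itself factors through a projective complex $P$, say $\alpha = v u$ with $u : M \to P$ and $v : P \to C$, whence $f = (\beta v) u$ factors through $P$. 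As $f$ was arbitrary, $N \in {\underline {\mathfrak{Pr}}}_{\C(R)}^{-1}(M)$.

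No step is expected to pose a genuine difficulty: the substantive computation, namely that the disc-sum contractible complexes with components in the module domains lie in ${\underline {\mathfrak{Pr}}}_{\C(R)}^{-1}(M)$, has been carried out in Proposition \ref{prop-1}, and the remainder is factorization bookkeeping. The only point that needs care is the input to $1 \Rightarrow 3$ — the description of projective complexes as $\oplus_{n}\overline{Q_n}[n]$ with $Q_n$ projective, together with the matching of degrees — so that the resulting components $X_n$ are correctly paired with $M_n$.
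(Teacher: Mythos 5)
Your proposal is correct and follows essentially the same route as the paper: the paper also disposes of $1\Rightarrow 3$ by noting that a projective complex is a contractible complex with projective components, gets $3\Rightarrow 2$ from Proposition \ref{prop-1}, and proves $2\Rightarrow 1$ by the same two-step transitivity factorization through a projective complex. The only cosmetic difference is that the paper states $1\Rightarrow 2$ separately rather than closing a single cycle, which changes nothing of substance.
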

\begin{proof} $1.\Rightarrow 2.$ This is clear since every projective complex holds in ${\underline {\mathfrak{Pr}}}_{\C(R)}^{-1}(M)$.

$2.\Rightarrow 1.$ Let $f: M \to N$ be a morphism of complexes. By the hypothesis there exist two morphisms of complexes $\alpha: M \to L$ and $\beta:L\to N$ such that $f=\beta \alpha$ and that $L\in {\underline {\mathfrak{Pr}}}_{\C(R)}^{-1}(M)$. But then, $\alpha: M \to L$ factors through a projective complex $P$ so $f$ factors through $P$.

$1.\Rightarrow 3.$ Clear since every projective module holds in the subprojectivity domain of any module.

$3. \Rightarrow 2.$ Apply Proposition \ref{prop-1}.
\end{proof}

Notice that conditions $1.$ and $2.$ of Proposition \ref{pro-cont2} are equivalent in any abelian category with enough projectives.

\begin{lem}\label{lem-nul1}
Let $f: X \to Y$ be a null-homotopic morphism of complexes by a morphism $s$. If every morphism $s_n:X_n \to Y_{n+1}$ factors through a module $L_{n+1}$, then $ f:X\to Y$ factors through the contractible complex $\oplus_{n \in \mathbb{Z}} \overline{L_{n+1}}[n]$. In particular, $ f: X \to Y$ factors through the contractible complex $\oplus_{n \in \mathbb{Z}} \overline{Y_{n+1}}[n]$.
\end{lem}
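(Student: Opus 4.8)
The plan is to build an explicit factorization $f=hg$ through the contractible complex $C:=\oplus_{n\in\mathbb{Z}}\overline{L_{n+1}}[n]$ directly out of the homotopy data. First I would record the hypotheses in coordinates: the null-homotopy reads $f_n=d^Y_{n+1}s_n+s_{n-1}d^X_n$, and the factorization assumption supplies morphisms $a_n:X_n\to L_{n+1}$ and $b_n:L_{n+1}\to Y_{n+1}$ with $s_n=b_na_n$. Next I would pin down the shape of $C$. Since $(\overline{L_{n+1}}[n])_k$ is nonzero only for $n\in\{k-1,k\}$, we get $C_k=L_{k+1}\oplus L_k$, where the summand $L_{k+1}$ is the degree-$k$ (bottom) end of the disc $\overline{L_{k+1}}[k]$ and $L_k$ is the degree-$k$ (top) end of $\overline{L_k}[k-1]$. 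Unwinding the shift convention $X[n]_i=X_{i-n}$ with differential $(-1)^nd^X_{i-n}$ then shows that the bottom copy carries no outgoing differential while the top copy maps by $(-1)^{k-1}$ onto the bottom copy one degree lower; that is, $d^C_k(a,b)=((-1)^{k-1}b,0)$ for $(a,b)\in L_{k+1}\oplus L_k$.

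With this description I would define $g:X\to C$ and $h:C\to Y$ by $g_k(x)=(a_k(x),(-1)^{k-1}a_{k-1}d^X_k(x))$ and $h_k(u,v)=d^Y_{k+1}b_k(u)+(-1)^{k-1}b_{k-1}(v)$, where $(u,v)\in L_{k+1}\oplus L_k$. Composing gives $h_kg_k(x)=d^Y_{k+1}b_ka_k(x)+b_{k-1}a_{k-1}d^X_k(x)=d^Y_{k+1}s_k(x)+s_{k-1}d^X_k(x)=f_k(x)$, so $f=hg$ on the nose, the two terms of $h\circ g$ reproducing exactly the two terms of the homotopy identity.

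It then remains to verify that $g$ and $h$ are genuine morphisms of complexes. For $g$, the condition $d^C_kg_k=g_{k-1}d^X_k$ collapses, after the signs cancel, to $d^X_{k-1}d^X_k=0$; for $h$, the condition $d^Y_kh_k=h_{k-1}d^C_k$ collapses to $d^Y_kd^Y_{k+1}=0$. I expect the only real delicacy to be the sign and index bookkeeping: one has to keep track of which copy of $L_k$ inside $C_k$ is a cycle and which is not, and then choose the signs in $d^C$, $g$ and $h$ so that both chain-map identities and the equality $f=hg$ hold simultaneously; there is no conceptual obstacle beyond this accounting. Finally, the last sentence of the statement is simply the special case $L_{n+1}=Y_{n+1}$, $a_n=s_n$, $b_n=\id_{Y_{n+1}}$, in which each $s_n$ factors trivially through $Y_{n+1}$ and $C=\oplus_{n\in\mathbb{Z}}\overline{Y_{n+1}}[n]$.
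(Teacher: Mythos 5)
Your proposal is correct and follows essentially the same route as the paper: both build the explicit factorization $f=hg$ through $C_k=L_{k+1}\oplus L_k$ with $g_k=(a_k,\pm a_{k-1}d^X_k)$ and $h_k(u,v)=d^Y_{k+1}b_k(u)\pm b_{k-1}(v)$, and verify the two chain-map identities plus $h_kg_k=f_k$ directly from the homotopy relation. The only difference is cosmetic: you carry the shift sign $(-1)^{k-1}$ through $d^C$, $g$ and $h$ (consistently, and the signs do cancel as you claim), whereas the paper works with the unsigned, isomorphic model of the contractible complex.
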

\begin{proof}
Suppose that for any $n$ there exist two morphisms $\alpha_n:X_n\to L_{n+1}$ and $\beta_n:L_{n+1}\to Y_{n+1}$ such that $s_n= \beta_n \alpha_n$. Then, we have the situation $$\xymatrix{X_{n+1}\ar[rrr]^{d_{n+1}^X}\ar[rdd]_{f_{n+1}}&&&X_{n}\ar[rrr]^{d_{n}^X}\ar[rdd]_{f_n}\ar[lldd]^{s_{n}} \ar[lld]_{\alpha_{n}}&&& X_{n-1} \ar[rdd]_{f_{n-1}}\ar[lldd]^{s_{n-1}}\ar[lld]_{\alpha_{n-1}} \\ & L_{n+1}\ar[d]^{\beta_{n}}&&&L_n\ar[d]^{\beta_{n-1}}&&& \\ & Y_{n+1}\ar[rrr]^{d_{n+1}^Y}&&&Y_{n}\ar[rrr]^{d_{n}^Y}&&& Y_{n-1}}$$

Call $Z$ the complex $\oplus_{n \in \mathbb{Z}} \overline{L_{n+1}}[n]$ and consider, for every $n \in \mathbb{Z}$, the two morphisms of modules $h_n:L_{n+1}\oplus L_n\to Y_n$ given by $h_n(t,z)=d^Y_{n+1}\beta_n(t)+\beta_{n-1}(z)$, and $g_n:X_n\to L_{n+1}\oplus L_n$ given by $g_n=(\alpha_n,\alpha_{n-1}d_n^X)$. We claim that both $h:Z\to Y$ and $g:X\to Z$ are morphisms of complexes. 

For any $n \in \mathbb{Z}$, any $t \in L_{n+1}$ and any $z \in L_{n}$ we have $$h_{n-1}d^Z_n(t,z)=h_{n-1}(z,0)=d^Y_{n} \beta_{n-1}(z)=d^Y_{n}(d^Y_{n+1} \beta_n(t) + \beta_{n-1}(z))=d^Y_{n} h_n(t,z),$$ so $h$ is a morphism of complexes, and for any $n \in \mathbb{Z}$ we have $$g_{n-1}d^X_n=(\alpha_{n-1},\alpha_{n-2}d_{n-1}^X)d^X_n=(\alpha_{n-1}d^X_n, 0)=d^Z_{n}(\alpha_n,\alpha_{n-1}d_n^X)=d^Z_{n} g_n,$$ so $g$ is also a morphism of complexes.

Now we see that $f=hg$ since for any $n \in \mathbb{Z}$ and any $x \in X_n$ we have $$h_n g_n(x)=d^Y_{n+1} \beta_n \alpha_n(x) + \beta_{n-1}\alpha_{n-1}d_n^X(x)= d^{Y}_{n+1}s_n(x)+s_{n-1}d_n^{X}(x)=f_n(x).$$

Therefore, $f:X\to Y$ factors through the contractible complex $Z=\oplus_{n \in \mathbb{Z}} \overline{L_{n+1}}[n]$.
\end{proof}

\begin{thm} \label{thm-4-1}
Let $M$ and $N$ be two complexes such that $N_{n+1} \in {\underline {\mathfrak{Pr}}}_{R\Mod}^{-1}(M_n)$ for every $n \in \mathbb{Z}$. Then, $ N \in {\underline {\mathfrak{Pr}}}_{\C(R)}^{-1}(M)$ if and only if $\Hom_{\K(R)}(M,N)=0.$
\end{thm}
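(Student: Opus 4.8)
The plan is to prove both implications by reducing everything to factorizations through contractible complexes, using Lemma \ref{lem-nul1} and Proposition \ref{pro-cont2} together with the standard identification of the projective objects of $\C(R)$ with the contractible complexes $\oplus_{n\in\mathbb{Z}}\overline{P_n}[n]$ whose components $P_n$ are projective modules. The decisive observation making the whole argument run is that \emph{any} morphism factoring through a contractible complex is automatically null-homotopic: if $f=\beta\alpha$ with the middle complex contractible, then transporting a contracting homotopy across the chain maps $\alpha$ and $\beta$ produces a homotopy for $f$.

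For the implication $N\in{\underline{\mathfrak{Pr}}}_{\C(R)}^{-1}(M)\Rightarrow\Hom_{\K(R)}(M,N)=0$, I would start from an arbitrary $f:M\to N$ and use subprojectivity to factor it through a projective complex $P$, say $f=\beta\alpha$ with $\alpha:M\to P$ and $\beta:P\to N$. Since $P$ is contractible, $\id_P$ is null-homotopic, say by a homotopy $t$ with $\id_{P,n}=d^P_{n+1}t_n+t_{n-1}d^P_n$. Conjugating by $\alpha$ and $\beta$ and using that both are morphisms of complexes (so $\beta_n d^P_{n+1}=d^N_{n+1}\beta_{n+1}$ and $d^P_n\alpha_n=\alpha_{n-1}d^M_n$) gives $f_n=d^N_{n+1}s_n+s_{n-1}d^M_n$ with $s_n=\beta_{n+1}t_n\alpha_n$. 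Thus every $f$ is null-homotopic, i.e. $\Hom_{\K(R)}(M,N)=0$. Note that this direction does not use the hypothesis on the components, and it can alternatively be phrased by invoking $1.\Rightarrow 3.$ of Proposition \ref{pro-cont2} followed by the observation above.

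For the converse, I would take $\Hom_{\K(R)}(M,N)=0$ and let $f:M\to N$ be any morphism, so $f$ is null-homotopic, say by $s$ with $s_n:M_n\to N_{n+1}$. Here the hypothesis $N_{n+1}\in{\underline{\mathfrak{Pr}}}_{R\Mod}^{-1}(M_n)$ is exactly what is needed: it lets me factor each $s_n$ through a projective module $L_{n+1}$. Feeding these factorizations into Lemma \ref{lem-nul1} yields a factorization of $f$ through the contractible complex $\oplus_{n\in\mathbb{Z}}\overline{L_{n+1}}[n]$, which is moreover a projective complex because every $L_{n+1}$ is projective. Hence every morphism $M\to N$ factors through a projective complex, which is precisely the statement $N\in{\underline{\mathfrak{Pr}}}_{\C(R)}^{-1}(M)$. (Equivalently, one applies $3.\Rightarrow 1.$ of Proposition \ref{pro-cont2} with $X_n=L_{n+1}\in{\underline{\mathfrak{Pr}}}_{R\Mod}^{-1}(M_n)$, since projective modules lie in every subprojectivity domain.)

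The step demanding the most care is the converse, specifically the index bookkeeping when applying Lemma \ref{lem-nul1}: the degree shift between the homotopy maps $s_n:M_n\to N_{n+1}$ and the disc summands $\overline{L_{n+1}}[n]$ must be tracked so that the resulting complex genuinely has the form $\oplus_n\overline{X_n}[n]$ with $X_n=L_{n+1}\in{\underline{\mathfrak{Pr}}}_{R\Mod}^{-1}(M_n)$. Apart from this indexing, both implications are essentially formal once the equivalence between projective complexes and contractible complexes with projective components is in hand; all the genuine computation has already been isolated in Lemma \ref{lem-nul1}.
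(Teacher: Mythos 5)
Your proof is correct and follows essentially the same route as the paper: both directions reduce to Lemma \ref{lem-nul1} and the identification of projective complexes with contractible complexes having projective components. The only (harmless) differences are that you compute the transported homotopy $s_n=\beta_{n+1}t_n\alpha_n$ explicitly where the paper simply cites \cite[Corollary 3.5]{Gillespie2}, and that in the converse you apply the subprojectivity hypothesis to the homotopy maps $s_n$ \emph{before} invoking Lemma \ref{lem-nul1} (so the resulting contractible complex is already projective), whereas the paper factors through $\oplus_{n\in\mathbb{Z}}\overline{N_{n+1}}[n]$ first and then finishes with Proposition \ref{pro-cont2}.
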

\begin{proof}
If $N \in {\underline {\mathfrak{Pr}}}_{\C(R)}^{-1}(M)$ then every morphism $M \to N$ factors through a projective complex. Then, by \cite[Corollary 3.5]{Gillespie2}, $\Hom_{\K(R)}(M,N)=0$. 

Conversely, if $\Hom_{\K(R)}(M,N)=0$ then, by Lemma \ref{lem-nul1}, every morphism $ M \to N$ factors through the contractible complex $\oplus_{n \in \mathbb{Z}} \overline{N_{n+1}}[n]$. Therefore, $ N \in {\underline {\mathfrak{Pr}}}_{\C(R)}^{-1}(M)$ by Proposition \ref{pro-cont2}.
\end{proof}

The following example shows that the condition $N_{n+1} \in {\underline {\mathfrak{Pr}}}_{R\Mod}^{-1}(M_n)$ for every $n \in \mathbb{Z}$ in Theorem \ref{thm-4-1} cannot be removed in general.

\begin{ex}\label{ex-2main1}
Let $X$ be any non-projective module and choose any other module $Y$ out of the the subprojectivity domain of $X$ (such modules exist over any non semisimple ring). It is clear that $\Hom_{\K(R)}(\overline{X}, \overline{Y})=0$ and, by Lemma \ref{lem-1}, that $\overline{Y}\notin {\underline {\mathfrak{Pr}}}_{\C(R)}^{-1}(\overline{X})$.
\end{ex}

Given two complexes $M$ and $N$, it is clear that the condition ``$N_{n+1} \in {\underline {\mathfrak{Pr}}}_{R\Mod}^{-1}(M_n)$ for every $n \in \mathbb{Z}$" is not enough in general to get $N \in {\underline {\mathfrak{Pr}}}_{\C(R)}^{-1}(M)$. For instance, if $R$ is semisimple and $M$ is not exact (so $M$ is not a projective complex), then for sure we can find complexes not in ${\underline {\mathfrak{Pr}}}_{\C(R)}^{-1}(M)$.

In the following result we prove that this condition suffices for exact complexes if and only if the ring $R$ is semisimple.

\begin{prop}\label{prop-semisimple}
The following conditions are equivalent.
\begin{enumerate}
\item $R$ is semisimple.
\item  For every complex $M$ and every exact complex $N$, if $N_{n+1} \in {\underline {\mathfrak{Pr}}}_{R\Mod}^{-1}(M_n)$ for every $n \in \mathbb{Z}$, then $ N \in {\underline {\mathfrak{Pr}}}_{\C(R)}^{-1}(M)$.
\item For every module $M$ and every exact complex $N$, if there exists $n \in \mathbb{Z}$ such that $N_{n+1} \in {\underline {\mathfrak{Pr}}}_{R\Mod}^{-1}(M)$, then $ N \in {\underline {\mathfrak{Pr}}}_{\C(R)}^{-1}(\underline{M}[n])$.
\end{enumerate}
\end{prop}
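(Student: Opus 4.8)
The plan is to prove the cyclic chain of implications $1 \Rightarrow 2 \Rightarrow 3 \Rightarrow 1$, the last one being the substantive step.

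For $1 \Rightarrow 2$, I would use that over a semisimple ring every module is projective and every short exact sequence of modules splits. Hence, for an exact complex $N$, all the sequences $0 \to Z_n(N) \to N_n \to B_{n-1}(N) \to 0$ split, so $N$ is a contractible complex with projective components, i.e. a projective complex. Since every projective complex belongs to ${\underline {\mathfrak{Pr}}}_{\C(R)}^{-1}(M)$ for any complex $M$ (a fact repeatedly used above), the conclusion $N \in {\underline {\mathfrak{Pr}}}_{\C(R)}^{-1}(M)$ follows at once; in fact the hypothesis on the components is not even needed here. (Alternatively one could invoke Theorem \ref{thm-4-1}: contractibility of $N$ gives $\Hom_{\K(R)}(M,N)=0$.)

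For $2 \Rightarrow 3$, I would simply specialize statement $2$ to the complex $\underline{M}[n]$ in place of the general complex appearing there. Its components are $M$ in degree $n$ and $0$ in all other degrees; since the zero module lies in ${\underline {\mathfrak{Pr}}}_{R\Mod}^{-1}(X)$ for every module $X$ (the only morphism into it being zero), the requirement that $N_{i+1} \in {\underline {\mathfrak{Pr}}}_{R\Mod}^{-1}\bigl((\underline{M}[n])_i\bigr)$ for all $i$ collapses to the single condition $N_{n+1} \in {\underline {\mathfrak{Pr}}}_{R\Mod}^{-1}(M)$. The conclusion $N \in {\underline {\mathfrak{Pr}}}_{\C(R)}^{-1}(\underline{M}[n])$ is then exactly what statement $3$ asserts.

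The heart of the argument is $3 \Rightarrow 1$. First I would combine statement $3$ with Lemma \ref{lem-sph} to extract a purely module-theoretic consequence: if $N$ is an exact complex with $N_{n+1} \in {\underline {\mathfrak{Pr}}}_{R\Mod}^{-1}(M)$, then $Z_n(N) \in {\underline {\mathfrak{Pr}}}_{R\Mod}^{-1}(M)$. The key idea is then to realize an \emph{arbitrary} module $A$ as $Z_n(N)$ for an exact complex $N$ whose degree-$(n+1)$ term is projective. Concretely, take an epimorphism $\pi:P \to A$ with $P$ projective and kernel $K$, set $N_n=A$ with $d_n=0$, $N_{n+1}=P$ with $d_{n+1}=\pi$, and splice a projective resolution of $K$ above; this yields an exact complex with $Z_n(N)=B_n(N)=\Im\,\pi=A$. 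Since $N_{n+1}=P$ is projective it lies in ${\underline {\mathfrak{Pr}}}_{R\Mod}^{-1}(M)$ for \emph{every} module $M$, so the consequence above gives $A=Z_n(N)\in {\underline {\mathfrak{Pr}}}_{R\Mod}^{-1}(M)$ for every $M$. Taking $M=A$ yields $A\in {\underline {\mathfrak{Pr}}}_{R\Mod}^{-1}(A)$, meaning $\id_A$ factors through a projective, so $A$ is a direct summand of a projective and hence projective. As $A$ is arbitrary, every module is projective and $R$ is semisimple. I expect the main obstacle to be precisely this construction: producing an exact complex that places a projective term next to the chosen cycle module while realizing an arbitrary module as that cycle is what converts the abstract hypothesis of statement $3$ into the concrete statement that every module is projective; the other two implications are bookkeeping once one notes that exact complexes over semisimple rings are projective and that $\underline{M}[n]$ has vanishing components away from degree $n$.
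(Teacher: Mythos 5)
Your proposal is correct and follows essentially the same route as the paper: exact complexes over semisimple rings are projective for $1\Rightarrow 2$, specialization to $\underline{M}[n]$ for $2\Rightarrow 3$, and for $3\Rightarrow 1$ the exact complex you build (an epimorphism $P\to A$ with a projective resolution of its kernel spliced on, with $A$ placed as the degree-$n$ cycle) is exactly the paper's augmented projective resolution $\mathcal{P}$ of $M$, combined with Lemma \ref{lem-sph} to get $M\in{\underline {\mathfrak{Pr}}}_{R\Mod}^{-1}(M)$ and hence $M$ projective.
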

\begin{proof} $1.\Rightarrow 2.$ Every exact complex $N$ is projective so $N\in {\underline {\mathfrak{Pr}}}_{\C(R)}^{-1}(M)$ for every complex $M$.

$2. \Rightarrow 3.$ Clear.

$3. \Rightarrow 1.$ Let $M$ be a module and ${\cal P}$ be a projective resolution of $M$. Then, ${\cal P}_1\in{\underline {\mathfrak{Pr}}}_{R\Mod}^{-1}(\underline{M})$ and so ${\cal P}\in{\underline {\mathfrak{Pr}}}_{\C(R)}^{-1}(\underline{M})$ by assumption. Then, by Lemma \ref{lem-sph}, $M=Z_{0}({\cal P}) \in  {\underline {\mathfrak{Pr}}}_{R\Mod}^{-1}(M)$. This means that  $M$ is projective and therefore that $R$ is semisimple.
\end{proof}

Given two complexes $M$ and $N$, it is natural to ask whether $ N \in {\underline {\mathfrak{Pr}}}_{\C(R)}^{-1}(M)$ implies that $N_{n+1} \in {\underline {\mathfrak{Pr}}}_{R\Mod}^{-1}(M_n)$ for every $n \in \mathbb{Z}$. This is not true in general: take any non-projective module $X$ and choose any other module $Y$ out of the the subprojectivity domain of $X$ (such modules exist over any non semisimple ring). Then, the complex $\underline{Y}[2]$ belongs to ${\underline {\mathfrak{Pr}}}_{\C(R)}^{-1}(X)$ since $\Hom_{\C(R)}(\overline{X}, \underline{Y}[2])=0$, but $\underline{Y}[2]_2=Y\notin{\underline {\mathfrak{Pr}}}_{R\Mod}^{-1}(\overline{X})$.

However, if we add the condition ``$N\in {\underline {\mathfrak{Pr}}}_{\C(R)}^{-1}(M[1])$", then Proposition \ref{prop-compon} says that $N_{n+1} \in {\underline {\mathfrak{Pr}}}_{R\Mod}^{-1}(M_n)$ for every $n \in \mathbb{Z}$.\\

Inspired by Proposition \ref{prop-compon}, we give the following result.
	
\begin{prop} \label{prop-shift}
Let $M$ and $N$ be two complexes. The following statements are equivalent.
\begin{enumerate}
\item $N\in {\underline {\mathfrak{Pr}}}_{\C(R)}^{-1}(M[n])$ for every $n \in \mathbb{Z}$.
\item For every $i,j \in \mathbb{Z}$,  $N_i \in {\underline {\mathfrak{Pr}}}_{R\Mod}^{-1}(M_j)$, and  $\Hom_{\K(R)}(M[n],N)=0$ for every $n\in \mathbb{Z}$.
\end{enumerate}
\end{prop}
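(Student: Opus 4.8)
The plan is to deduce both implications from the two results already proved for a single unshifted pair of complexes --- Proposition \ref{prop-compon} and Theorem \ref{thm-4-1} --- by applying them to each shift $M[n]$ in place of $M$. The only genuine work is bookkeeping with the shift convention $(M[n])_i = M_{i-n}$, together with the observation that the universal quantifier ``for every $n$'' in statement 1 supplies, for each fixed $n$, both $M[n]$ and its downward shift $(M[n])[-1] = M[n-1]$ at once --- which is precisely the input Proposition \ref{prop-compon} demands.

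For $1 \Rightarrow 2$ I would fix $n$ and note that $(M[n])[-1] = M[n-1]$, so the hypothesis gives $N \in {\underline {\mathfrak{Pr}}}_{\C(R)}^{-1}((M[n])[-1]) \cap {\underline {\mathfrak{Pr}}}_{\C(R)}^{-1}(M[n])$. Proposition \ref{prop-compon} applied to the pair $(M[n], N)$ then yields $N_i \in {\underline {\mathfrak{Pr}}}_{R\Mod}^{-1}((M[n])_i) = {\underline {\mathfrak{Pr}}}_{R\Mod}^{-1}(M_{i-n})$ for all $i$. To reach an arbitrary pair $i, j \in \mathbb{Z}$ I would choose $n = i - j$, which forces $M_{i-n} = M_j$ and gives the first assertion of statement 2. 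For the second assertion, $N \in {\underline {\mathfrak{Pr}}}_{\C(R)}^{-1}(M[n])$ means every morphism $M[n] \to N$ factors through a projective complex, so $\Hom_{\K(R)}(M[n], N) = 0$ by \cite[Corollary 3.5]{Gillespie2}, exactly as in the proof of Theorem \ref{thm-4-1}.

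For $2 \Rightarrow 1$ I would fix $n$ and apply Theorem \ref{thm-4-1} to $(M[n], N)$. Its hypothesis asks for $N_{k+1} \in {\underline {\mathfrak{Pr}}}_{R\Mod}^{-1}((M[n])_k) = {\underline {\mathfrak{Pr}}}_{R\Mod}^{-1}(M_{k-n})$ for every $k$, which is the instance $i = k+1$, $j = k-n$ of the first condition of statement 2. The theorem then produces the equivalence $N \in {\underline {\mathfrak{Pr}}}_{\C(R)}^{-1}(M[n]) \Leftrightarrow \Hom_{\K(R)}(M[n], N) = 0$, and the right-hand side holds by the second condition of statement 2. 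Letting $n$ vary gives statement 1.

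I expect no real obstacle beyond the indexing: the one thing to get right is that Proposition \ref{prop-compon} requires two consecutive shifts (which statement 1 provides automatically) and that the joint range of the degree pairs $(i, n)$, respectively $(k+1, k-n)$, sweeps out all pairs of integers, so the componentwise conditions line up exactly. No homological input beyond the two cited results is needed.
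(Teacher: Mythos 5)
Your proof is correct and follows exactly the paper's own route: the paper's entire proof is ``Apply Proposition \ref{prop-compon} and Theorem \ref{thm-4-1},'' and your write-up simply supplies the shift bookkeeping (including the key identification $(M[n])[-1]=M[n-1]$ and the choice $n=i-j$) that the paper leaves implicit.
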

\begin{proof} Apply Proposition \ref{prop-compon} and Theorem \ref{thm-4-1}.
\end{proof}
	
Now, we give some applications of Proposition \ref{prop-shift}. Namely, given any module $M$, Proposition \ref{prop-shift} can be used to study the subprojectivity domain of the complexes $\oplus_{n\in \mathbb{Z}}\overline{M}[n]$ (Proposition \ref{lem-discs}) and $\oplus_{n\in \mathbb{Z}}\underline{M}[n]$ (Proposition \ref{prop-sph}).

\begin{prop}\label{lem-discs}
Let $N$ be a complex and $M$ be a module. The following statements are equivalent.
\begin{enumerate}
\item $N \in {\underline {\mathfrak{Pr}}}_{\C(R)}^{-1}(\oplus_{n\in \mathbb{Z}}\overline{M}[n])$.
\item $N \in {\underline {\mathfrak{Pr}}}_{\C(R)}^{-1}(\overline{M}[n])$ for every $n\in \mathbb{Z}$.
\item $N_{n} \in  {\underline {\mathfrak{Pr}}}_{R\Mod}^{-1}(M)$ for every $n\in \mathbb{Z}$. 
\end{enumerate}
\end{prop}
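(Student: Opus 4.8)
The plan is to break the three-way equivalence into $1\Leftrightarrow 2$, which is a coproduct bookkeeping argument, and $2\Leftrightarrow 3$, which falls straight out of Proposition \ref{prop-shift} applied to the disc complex $\overline{M}$.

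For $1\Leftrightarrow 2$, the point is that $\oplus_{n\in\mathbb{Z}}\overline{M}[n]$ is a coproduct of the complexes $\overline{M}[n]$, so I would establish the general fact that, in $\C(R)$, $N\in{\underline {\mathfrak{Pr}}}_{\C(R)}^{-1}(\oplus_{i}X_i)$ if and only if $N\in{\underline {\mathfrak{Pr}}}_{\C(R)}^{-1}(X_i)$ for every $i$. The forward implication is immediate: given a morphism $\overline{M}[m]\to N$, I extend it by zero to $\oplus_{n}\overline{M}[n]\to N$, factor the latter through a projective complex by hypothesis, and restrict the factorization to the summand $\overline{M}[m]$. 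For the converse, given $f:\oplus_{n}\overline{M}[n]\to N$ I factor each composite $f\iota_n:\overline{M}[n]\to N$ through a projective complex $P_n$ and assemble the pieces into a factorization of $f$ through $\oplus_n P_n$; here one uses that a coproduct of projective complexes is again projective in $\C(R)$.

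For $2\Leftrightarrow 3$, I would apply Proposition \ref{prop-shift} with its complex taken to be $\overline{M}$. Its first statement then reads verbatim as statement 2 here, while its second statement becomes the conjunction of ``$N_i\in{\underline {\mathfrak{Pr}}}_{R\Mod}^{-1}((\overline{M})_j)$ for all $i,j\in\mathbb{Z}$'' and ``$\Hom_{\K(R)}(\overline{M}[n],N)=0$ for all $n\in\mathbb{Z}$''. Since $(\overline{M})_j=M$ for $j\in\{0,1\}$ and is $0$ otherwise, and since $0$ lies in every subprojectivity domain, the componentwise condition collapses exactly to statement 3. The homotopy condition comes for free: each $\overline{M}[n]$ is a disc, hence a contractible complex, so $\id_{\overline{M}[n]}$ is null-homotopic and $\overline{M}[n]$ is a zero object of $\K(R)$; therefore $\Hom_{\K(R)}(\overline{M}[n],N)=0$ for every $N$.

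I do not anticipate a genuine obstacle, as the argument is a reduction to earlier results; the only care needed is at three minor points. First, the converse of $1\Leftrightarrow 2$ relies on coproducts of projective complexes being projective. Second, and most importantly, one must notice that the contractibility of $\overline{M}[n]$ renders the homotopy-vanishing hypothesis of Proposition \ref{prop-shift} automatic, which is precisely what turns an apparent extra condition into no condition at all. (Alternatively, $2\Leftrightarrow 3$ can be proved by hand from the natural identification $\Hom_{\C(R)}(\overline{M}[n],N)\cong\Hom_R(M,N_{n+1})$, building and restricting factorizations degreewise; there a harmless reindexing between $N_n$ and $N_{n+1}$ appears, invisible once everything is quantified over all $n\in\mathbb{Z}$.)
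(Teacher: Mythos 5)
Your proof is correct and follows essentially the same route as the paper: the paper disposes of $1\Leftrightarrow 2$ by citing the general compatibility of subprojectivity domains with direct sums (\cite[Proposition 2.16]{SubprojAb}), which you simply prove inline, and it obtains $2\Leftrightarrow 3$ exactly as you do, by applying Proposition \ref{prop-shift} to $\overline{M}$ and noting that $\Hom_{\K(R)}(\overline{M}[n],N)=0$ automatically because $\overline{M}[n]$ is contractible.
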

\begin{proof} $1. \Leftrightarrow 2.$ Clear by \cite[Proposition 2.16]{SubprojAb}.

$2. \Leftrightarrow 3.$ Clear by Proposition \ref{prop-shift} since $\Hom_{\K(R)}(\overline{M}[n],N)=0$ for every $n \in \mathbb{Z}$.
\end{proof}

\begin{prop}\label{prop-sph}
Let $N$ be a complex and $M$ be a module. The following statements are equivalent.
\begin{enumerate}
\item $N\in {\underline {\mathfrak{Pr}}}_{\C(R)}^{-1}(\oplus_{n\in \mathbb{Z}}\underline{M}[n])$.
\item $N\in {\underline {\mathfrak{Pr}}}_{\C(R)}^{-1}(\underline{M}[n])$ for every $n\in \mathbb{Z}$.
\item $N$ is $\Hom_R(M,-)$-exact and $ N_n \in {\underline {\mathfrak{Pr}}}_{R\Mod}^{-1}(M)$ for every $n\in \mathbb{Z}$.
\end{enumerate}
\end{prop}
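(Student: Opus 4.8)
The plan is to prove $1 \Leftrightarrow 2$ from the behaviour of subprojectivity domains under direct sums, exactly as in Proposition \ref{lem-discs}, and then to obtain $2\Leftrightarrow 3$ by feeding the sphere complex $\underline{M}$ into Proposition \ref{prop-shift}. For the first equivalence I would simply invoke \cite[Proposition 2.16]{SubprojAb}: the direct sum $\oplus_{n\in\mathbb{Z}}\underline{M}[n]$ is $N$-subprojective if and only if each summand $\underline{M}[n]$ is, so $N\in{\underline {\mathfrak{Pr}}}_{\C(R)}^{-1}(\oplus_{n\in\mathbb{Z}}\underline{M}[n])$ holds precisely when $N\in{\underline {\mathfrak{Pr}}}_{\C(R)}^{-1}(\underline{M}[n])$ for every $n$.

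For $2\Leftrightarrow 3$, I would apply Proposition \ref{prop-shift} with the complex $\underline{M}$ playing the role of the complex called $M$ there. Statement 2 above is then literally condition 1 of Proposition \ref{prop-shift}, hence equivalent to the conjunction of ``$N_i\in{\underline {\mathfrak{Pr}}}_{R\Mod}^{-1}((\underline{M})_j)$ for all $i,j\in\mathbb{Z}$'' and ``$\Hom_{\K(R)}(\underline{M}[n],N)=0$ for every $n\in\mathbb{Z}$''. Since $(\underline{M})_j$ equals $M$ for $j=0$ and $0$ otherwise, and since $N_i\in{\underline {\mathfrak{Pr}}}_{R\Mod}^{-1}(0)$ holds trivially for every $i$, the first of these two conditions collapses to $N_n\in{\underline {\mathfrak{Pr}}}_{R\Mod}^{-1}(M)$ for every $n$, which is exactly the second half of statement 3.

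It remains to match the homotopy condition with $\Hom_R(M,-)$-exactness, and this is the one step requiring genuine care. I would compute $\Hom^{\bullet}(\underline{M},N)$ straight from the definitions in the Preliminaries: because $(\underline{M})_i=0$ for $i\neq 0$, one gets $\Hom^{\bullet}(\underline{M},N)_k=\Hom_R(M,N_k)$, and because the differential of $\underline{M}$ vanishes, the differential of $\Hom^{\bullet}(\underline{M},N)$ reduces to post-composition with $d^N$. Thus $\Hom^{\bullet}(\underline{M},N)$ is, up to the standard sign, the complex obtained by applying $\Hom_R(M,-)$ degreewise to $N$. Using $H_n(\Hom^{\bullet}(\underline{M},N))=\Hom_{\K(R)}(\underline{M}[n],N)$ from the Preliminaries, the vanishing of $\Hom_{\K(R)}(\underline{M}[n],N)$ for all $n$ is exactly the exactness of $\Hom_R(M,N)$, that is, the $\Hom_R(M,-)$-exactness of $N$. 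The main obstacle is therefore purely bookkeeping: tracking the index shifts and the sign in the differential of $\Hom^{\bullet}$ so that the identification is rigorous; no conceptual difficulty intervenes, and once this computation is in place the chain $2\Leftrightarrow 3$ closes immediately.
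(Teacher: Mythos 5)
Your proposal is correct and follows essentially the same route as the paper: $1\Leftrightarrow 2$ via \cite[Proposition 2.16]{SubprojAb}, and $2\Leftrightarrow 3$ by specializing Proposition \ref{prop-shift} to $\underline{M}$ and identifying $\Hom^{\bullet}(\underline{M},N)$ with $\Hom_R(M,N)$, so that $\Hom_{\K(R)}(\underline{M}[n],N)=H_n(\Hom_R(M,N))$. The only cosmetic difference is that the paper spells out the direction $3\Rightarrow 2$ by exhibiting the null-homotopy of a morphism $\underline{M}[n]\to N$ explicitly, whereas you obtain both directions at once from the homology identification; both arguments are sound.
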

\begin{proof}
$1.\Leftrightarrow 2.$ Clear by \cite[Proposition 2.16]{SubprojAb}.

$2.\Rightarrow 3.$ By Proposition \ref{prop-shift} we know that $N_n \in {\underline {\mathfrak{Pr}}}_{R\Mod}^{-1}(M)$ for every $n\in \mathbb{Z}$ and that $H_n(\Hom^\bullet(\underline{M},N))=\Hom_{\K(R)}(\underline{M}[n],N)=0$. But $\Hom^\bullet(\underline{M},N) \cong \Hom_R(M,N)$ so $\Hom_R(M,N)$ is exact.

$3.\Rightarrow 2.$ Let $n \in \mathbb{Z}$ and $f:\underline{M}[n] \to N$ be a morphism of complexes. Since $d^N_nf_n=0$ we get that $f_n \in \Ker(\Hom_R(M,d^N_{n}))=\Im(\Hom_R(M,d^N_{n+1}))$, so there exists a morphism of modules $g:M \to N_{n+1}$ such that $d^N_{n+1}g=f_n$. Thus, $f$ is null-homotopic and $\Hom_{\K(R)}(\underline{M}[n],N)=0$ for every $n \in \mathbb{Z}$. Proposition \ref{prop-shift} says then that $ N \in {\underline {\mathfrak{Pr}}}_{\C(R)}^{-1}(\underline{M}[n])$ for every $n \in \mathbb{Z}$.
\end{proof}

If let $M=R$ in Proposition \ref{prop-sph}, the condition ``$N$ is $\Hom_R(M,-)$-exact" means that $N$ is exact. This leads to the following characterization of exact complexes in terms of subprojectivity. 

\begin{cor}\label{cor-exac}
Let $N$ be a complex. The following assertions are equivalent.
\begin{enumerate}
\item $N$ is exact.
\item $N \in {\underline {\mathfrak{Pr}}}_{\C(R)}^{-1}(\oplus_{n\in \mathbb{Z}}\underline{R}[n])$.
\item $N \in {\underline {\mathfrak{Pr}}}_{\C(R)}^{-1}(\underline{R}[n])$ for every $n\in \mathbb{Z}$.
\end{enumerate}
\end{cor}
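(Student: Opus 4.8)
The plan is to deduce this corollary directly from Proposition \ref{prop-sph} by specializing the module $M$ there to $M=R$. With this choice, the complexes $\oplus_{n\in \mathbb{Z}}\underline{R}[n]$ and $\underline{R}[n]$ appearing in assertions (2) and (3) of the corollary are exactly the complexes $\oplus_{n\in \mathbb{Z}}\underline{M}[n]$ and $\underline{M}[n]$ appearing in assertions (1) and (2) of Proposition \ref{prop-sph}. Hence those two assertions of the corollary are literally the first two assertions of the proposition, and the whole task reduces to verifying that assertion (3) of Proposition \ref{prop-sph}, namely that $N$ is $\Hom_R(R,-)$-exact and $N_n \in {\underline {\mathfrak{Pr}}}_{R\Mod}^{-1}(R)$ for every $n \in \mathbb{Z}$, collapses to the single condition that $N$ be exact.

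For the exactness half, I would invoke the natural isomorphism $\Hom_R(R,-)\cong \mathrm{id}_{R\Mod}$. Applied in each degree and compatibly with the differentials, this yields an isomorphism of complexes of abelian groups $\Hom_R(R,N)\cong N$; therefore $\Hom_R(R,N)$ is exact if and only if $N$ is exact, i.e.\ $N$ is $\Hom_R(R,-)$-exact precisely when $N$ is exact. For the remaining half, I would observe that the condition $N_n \in {\underline {\mathfrak{Pr}}}_{R\Mod}^{-1}(R)$ is automatic for every $n$: by definition it says that $R$ is $N_n$-subprojective, and since $R$ is a projective module, every morphism $R\to N_n$ factors through a projective module (namely through $R$ itself via $\mathrm{id}_R$). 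Thus this part of assertion (3) imposes no restriction whatsoever, and assertion (3) of the proposition is equivalent to the exactness of $N$. Combining these two observations with Proposition \ref{prop-sph} gives the equivalence of (1), (2) and (3) of the corollary.

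I do not expect a genuine obstacle here, as the corollary is a clean specialization. The only two points that need a word of justification are the identification $\Hom_R(R,N)\cong N$ as complexes and the remark that the projectivity of $R$ renders the subprojectivity condition on the components vacuous; both are immediate, and once they are in place the statement follows formally from Proposition \ref{prop-sph}.
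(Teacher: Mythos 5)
Your proposal is correct and follows exactly the paper's route: the paper derives the corollary by setting $M=R$ in Proposition \ref{prop-sph}, noting (as you do) that $\Hom_R(R,-)$-exactness is just exactness and that the condition $N_n\in{\underline {\mathfrak{Pr}}}_{R\Mod}^{-1}(R)$ is vacuous because $R$ is projective. Nothing is missing.
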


There is now a natural question  which comes to mind after Corollary \ref{cor-exac}: we have described how the subprojectivity domain of the set of complexes $\{\underline{R}[n], n\in \mathbb{Z}\}$ is, so, what about the subprojectivity domain of each of the complexes $\underline{R}[n]$? Can we describe them as well? 

Given a complex $N$, we know, by Theorem \ref{thm-4-1}, that $N \in {\underline {\mathfrak{Pr}}}_{\C(R)}^{-1}(\underline{R}[n])$ if and only if $\Hom_{\K(R)}(\underline{R}[n],N)=0$. But, $$\Hom_{\K(R)}(\underline{R}[n],N)=H_n(\Hom^\bullet(\underline{R},N))=H_n(\Hom_R(R,N))=H_n(N).$$ So, the condition $\Hom_{\K(R)}(\underline{R}[n],N)=0$ is equivalent to $H_n(N)=0$. We state this fact in the following proposition.

\begin{prop}\label{prop-spherR}
Let $N$ be a complex and $n\in \mathbb{Z}$. The following assertions are equivalent.
\begin{enumerate}
\item $N \in {\underline {\mathfrak{Pr}}}_{\C(R)}^{-1}(\underline{R}[n])$.
\item $\Hom_{\K(R)}(\underline{R}[n],N)=0$.
\item $H_n(N)=0$.
\end{enumerate}	
\end{prop}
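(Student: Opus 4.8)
The plan is to deduce both equivalences from results already established: the equivalence $1 \Leftrightarrow 2$ from Theorem \ref{thm-4-1}, and the equivalence $2 \Leftrightarrow 3$ from the homological identification of $\Hom_{\K(R)}$ recorded in the preliminaries. In fact the heart of the matter is to recognize that the hypothesis of Theorem \ref{thm-4-1} is automatic when $M=\underline{R}[n]$, and that $\Hom^{\bullet}(\underline{R},N)$ is nothing but $N$ itself.

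First I would establish $1 \Leftrightarrow 2$ by specializing Theorem \ref{thm-4-1} to the case $M=\underline{R}[n]$. The only thing to verify is the standing hypothesis of that theorem, namely $N_{m+1}\in{\underline {\mathfrak{Pr}}}_{R\Mod}^{-1}((\underline{R}[n])_m)$ for every $m\in\mathbb{Z}$. The components of $\underline{R}[n]$ are $R$ in degree $n$ and $0$ elsewhere, so the requirement reduces to $N_{n+1}\in{\underline {\mathfrak{Pr}}}_{R\Mod}^{-1}(R)$ together with $N_{m+1}\in{\underline {\mathfrak{Pr}}}_{R\Mod}^{-1}(0)$ for $m\ne n$. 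Both are vacuous: every morphism out of a projective module (in particular out of $R$, or out of $0$) factors through that same projective, so the subprojectivity domains of $R$ and of $0$ are the whole of $R\Mod$. Thus Theorem \ref{thm-4-1} applies and yields $N\in{\underline {\mathfrak{Pr}}}_{\C(R)}^{-1}(\underline{R}[n])$ if and only if $\Hom_{\K(R)}(\underline{R}[n],N)=0$.

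Next I would prove $2 \Leftrightarrow 3$ by exhibiting a natural isomorphism $\Hom^{\bullet}(\underline{R},N)\cong N$ of complexes of abelian groups. Degreewise one has $\Hom^{\bullet}(\underline{R},N)_m=\prod_{i\in\mathbb{Z}}\Hom_R((\underline{R})_i,N_{i+m})=\Hom_R(R,N_m)\cong N_m$, the evaluation at $1$ providing the natural isomorphism. Since the differential of $\underline{R}$ vanishes, the differential formula for $\Hom^{\bullet}$ reduces to $d^N$ under this identification, so the isomorphism is one of complexes. Using the identity $\Hom_{\K(R)}(X[n],Y)=H_n(\Hom^{\bullet}(X,Y))$ from the preliminaries with $X=\underline{R}$ and $Y=N$, I obtain $\Hom_{\K(R)}(\underline{R}[n],N)=H_n(\Hom^{\bullet}(\underline{R},N))=H_n(N)$, whence condition $2$ is literally the vanishing of $H_n(N)$.

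I do not anticipate a genuinely hard step; the work here is entirely bookkeeping. The only two points requiring care are the verification that the hypothesis of Theorem \ref{thm-4-1} is satisfied for free, which rests on the elementary fact that projective modules (and the zero module) have the full module category as their subprojectivity domain, and the matching of differentials in the identification $\Hom^{\bullet}(\underline{R},N)\cong N$, which is immediate once one notes that $\underline{R}$ carries the zero differential.
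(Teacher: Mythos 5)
Your proposal is correct and follows essentially the same route as the paper: specialize Theorem \ref{thm-4-1} to $M=\underline{R}[n]$ to get $1\Leftrightarrow 2$, then identify $\Hom_{\K(R)}(\underline{R}[n],N)=H_n(\Hom^{\bullet}(\underline{R},N))=H_n(N)$ for $2\Leftrightarrow 3$. In fact you are slightly more careful than the paper, which leaves implicit the (automatic) verification that the hypothesis of Theorem \ref{thm-4-1} holds because $R$ and $0$ have all of $R\Mod$ as their subprojectivity domains.
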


Now, with Proposition \ref{prop-spherR} in hand, it is easy to see that subprojectivity domains are not closed under kernels of epimorphisms in general.  

\begin{ex}\label{exmp-1-spherR} 
  Consider the short exact sequence of complexes $$0 \to \underline{R} \to \overline{R} \to \underline{R}[1] \to 0.$$ It is clear by Proposition \ref{prop-spherR} that $\underline{R}[1]$ and $\overline{R}$ both hold in ${\underline {\mathfrak{Pr}}}_{\C(R)}^{-1}(\underline{R})$, but $\underline{R}$ does not. Therefore, the subprojectivity domain of $\underline{R}$ is not closed under kernels of epimorphisms.
\end{ex}

Moreover, Proposition \ref{prop-spherR} helps us to answer a question raised in Remark \ref{rem-cond1}.  Precisely, it is understood by the equivalence $(1\Leftrightarrow 4)$ in    Theorem \ref{thm-4-2} that   the second assertion remains equivalent to the first assertion even if we replace the condition   ``$P$ is projective'' with $P\in {\underline {\mathfrak{Pr}}}_{\C(R)}^{-1}(M)$. However, this fact does not hold true. Namely, the following example shows that if we replace  ``$P$ is projective''  with $P\in {\underline {\mathfrak{Pr}}}_{\C(R)}^{-1}(M)$ in assertion 2, the equivalent does not hold.

\begin{ex}\label{exmp-2-spherR}  
Let $0\to N_3 \to N_2 \to N_1 \to 0$ be a short exact sequence of modules such that $N_3\neq 0$ and let $X_i:= \overline{N_i}\oplus\underline{N_i}[-1]$ for $i\in \{1,2,3\}$. Then, we have an induced exact sequence of complexes $0\to X_3 \to X_2 \to X_1 \to 0$.

Moreover, we see that for $i \in \{1,2,3\}$ it holds that $H_0(X_i)=H_0(\overline{N_i})\oplus H_0(\underline{N_i}[-1])= 0$ and that $H_{-1}(X_i)=H_{-1}(\overline{N_i})\oplus H_{-1}(\underline{N_i}[-1])=H_{-1}(\underline{N_i}[-1])= N_i$. Therefore, we can assert that $N_1, N_2 \in {\underline {\mathfrak{Pr}}}_{\C(R)}^{-1}(\underline{R})$ and that $\Hom_{\K(R)}(\underline{R}[-1],X_3)\neq 0$ (see Proposition \ref{prop-spherR}).
\end{ex}

\section{Applications} \label{sect:aplic}
\hskip .5cm Recall that a complex $P$ is said to be DG-projective if its components are projective and $\Hom^\bullet(P,E)$ is exact for every exact complex $E$. In \cite[Proposition 2.3]{Yang2} it is proved that, under certain conditions, a ring is left hereditary if and only if every subcomplex of a DG-projective complex is DG-projective. Among these conditions, the authors included: ``Every exact complex of projective modules is projective''. In this section, using the properties of subprojectivity domains, we will show that the latter equivalence holds without the mentioned assumption.

\begin{prop}\label{prop-hered}
For any ring $R$, the following statements are equivalent.
\begin{enumerate}
\item $R$ is left hereditary.
\item For every complex $M$ and every exact complex $N$, if $N_{n} \in {\underline {\mathfrak{Pr}}}_{R\Mod}^{-1}(M_n)$ for every $n\in \mathbb{Z}$, then $ N \in {\underline {\mathfrak{Pr}}}_{\C(R)}^{-1}(M)$.
\item For every module $M$ and every exact complex $N$, if there exists $n \in \mathbb{Z}$ such that $N_{n} \in {\underline {\mathfrak{Pr}}}_{R\Mod}^{-1}(M)$, then $ N \in {\underline {\mathfrak{Pr}}}_{\C(R)}^{-1}(\underline{M}[n])$.
\item Every subcomplex of a DG-projective complex is DG-projective.
\end{enumerate}
\end{prop}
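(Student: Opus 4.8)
My plan is to prove the cycle $1\Rightarrow 2\Rightarrow 3\Rightarrow 1$, which ties the three subprojectivity conditions to left hereditariness, and then to attach statement 4 through $1\Rightarrow 4$ and $4\Rightarrow 1$. For $1\Rightarrow 2$ I would lean on Theorem \ref{thm-4-2}. Assume $R$ hereditary, $N$ exact, and $N_n\in{\underline {\mathfrak{Pr}}}_{R\Mod}^{-1}(M_n)$ for all $n$; choose a short exact sequence $0\to K\to P\to N\to 0$ with $P$ a projective complex. Projective complexes are contractible, hence exact, so the homology long exact sequence forces $K$ to be exact, and each $K_n$, being a submodule of the projective module $P_n$, is projective because $R$ is hereditary. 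The decisive local point is that an exact complex with projective components over a hereditary ring is contractible: the cycles $Z_n(K)=B_n(K)$ are submodules of projectives, hence projective, so every sequence $0\to Z_n(K)\to K_n\to B_{n-1}(K)\to 0$ splits and these splittings assemble into a contracting homotopy. A contractible $K$ vanishes in $\K(R)$, whence $\Hom_{\K(R)}(M[-1],K)=0$, and Theorem \ref{thm-4-2} yields $N\in{\underline {\mathfrak{Pr}}}_{\C(R)}^{-1}(M)$.

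The implication $2\Rightarrow 3$ is obtained by specializing statement 2 to the complex $\underline{M}[n]$: its only nonzero component is $M$ in degree $n$, so the hypothesis $N_i\in{\underline {\mathfrak{Pr}}}_{R\Mod}^{-1}((\underline{M}[n])_i)$ is vacuous for $i\neq n$ and reduces exactly to $N_n\in{\underline {\mathfrak{Pr}}}_{R\Mod}^{-1}(M)$ for $i=n$. For $3\Rightarrow 1$ I would adapt the argument of Proposition \ref{prop-semisimple}, the crucial difference being the absence of an index shift (this is precisely what separates hereditary from semisimple). Given a submodule $K$ of a projective module $P_0$, put $M':=P_0/K$ and let $N$ be the augmented projective resolution $\cdots\to P_1\to P_0\to M'\to 0$ regarded as an exact complex with $P_0$ in degree $0$; then $N_0=P_0$ is projective and $Z_0(N)=K$. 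Applying statement 3 to the \emph{module} $K$ with $n=0$ (the hypothesis $N_0\in{\underline {\mathfrak{Pr}}}_{R\Mod}^{-1}(K)$ holds because $P_0$ is projective) yields $N\in{\underline {\mathfrak{Pr}}}_{\C(R)}^{-1}(\underline{K})$, and Lemma \ref{lem-sph} then delivers $K=Z_0(N)\in{\underline {\mathfrak{Pr}}}_{R\Mod}^{-1}(K)$. Hence $\id_K$ factors through a projective, so $K$ is a retract of a projective and therefore projective; as $K$ ranges over all submodules of projectives, $R$ is left hereditary.

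Having established that $1$, $2$ and $3$ are equivalent, I would link statement 4. For $1\Rightarrow 4$, let $C$ be a subcomplex of a DG-projective complex $P$. Each $C_n\subseteq P_n$ is projective because $R$ is hereditary, and to see that $\Hom^\bullet(C,E)$ is exact for every exact complex $E$ it suffices to check $\Hom_{\K(R)}(C[n],E)=0$ for all $n$. Since $C[n]$ has projective components, $E_i\in{\underline {\mathfrak{Pr}}}_{R\Mod}^{-1}((C[n])_i)$ holds automatically, so statement 2 (equivalent to $1$) applied to $M=C[n]$ and the exact complex $E$ gives $E\in{\underline {\mathfrak{Pr}}}_{\C(R)}^{-1}(C[n])$, and therefore $\Hom_{\K(R)}(C[n],E)=0$ by \cite[Corollary 3.5]{Gillespie2}; thus $C$ is DG-projective. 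For $4\Rightarrow 1$, take a submodule $K$ of a projective module $P$: the disc $\overline{K}$ is a subcomplex of the projective (hence DG-projective) complex $\overline{P}$, so statement 4 makes $\overline{K}$ DG-projective, and its components being projective forces $K$ itself to be projective.

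The step I expect to be the main obstacle is the contractibility claim inside $1\Rightarrow 2$, because that is exactly where the hypothesis ``every exact complex of projective modules is projective'' of \cite[Proposition 2.3]{Yang2} is shown to be superfluous: over a hereditary ring it is a consequence of the splitting of the cycle sequences, not an independent assumption. Arranging this splitting so that it feeds correctly into Theorem \ref{thm-4-2}, so that the vanishing of $\Hom_{\K(R)}(M[-1],K)$ is genuinely what is produced, is the part demanding the most care.
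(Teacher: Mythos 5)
Your proof is correct, and its global architecture ($1\Rightarrow2\Rightarrow3\Rightarrow1$ together with $1\Rightarrow4$ and $4\Rightarrow1$) is the same as the paper's; the implications $1\Rightarrow2$, $2\Rightarrow3$ and $4\Rightarrow1$ are essentially the paper's own arguments (the paper summarizes your contractibility step by saying that $K$ is a projective complex, and in $4\Rightarrow1$ it uses the sphere $\underline{Y}\subseteq\underline{Q}$ where you use the disc $\overline{K}\subseteq\overline{P}$ --- immaterial variations). You genuinely diverge in two places, both to your advantage. For $3\Rightarrow1$ the paper fixes a submodule $Y$ of a projective and proves the stronger statement ${\underline {\mathfrak{Pr}}}_{R\Mod}^{-1}(Y)=R\Mod$ by applying statement 3 to the exact complex $\cdots\to 0\to X\to E(X)\to C\to 0\to\cdots$, which needs the external input that injective modules lie in the subprojectivity domain of submodules of projectives (\cite[Lemma 2.2]{Dur}); you instead apply statement 3 to a projective presentation of $P_0/K$, where the hypothesis $N_0=P_0\in{\underline {\mathfrak{Pr}}}_{R\Mod}^{-1}(K)$ is automatic, and Lemma \ref{lem-sph} gives $K\in{\underline {\mathfrak{Pr}}}_{R\Mod}^{-1}(K)$, i.e.\ $K$ projective. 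Your route is more self-contained (no injective envelopes, no citation of Durğun) and proves exactly what is needed. For $1\Rightarrow4$ the paper embeds the exact complex $E$ into an injective complex and runs two $3\times3$ diagram chases; you instead note that exactness of $\Hom^{\bullet}(C,E)$ is precisely the vanishing of $H_n(\Hom^{\bullet}(C,E))=\Hom_{\K(R)}(C[n],E)$ for all $n$, which follows by applying the already established statement 2 to $M=C[n]$ and $N=E$ (the componentwise hypothesis holds since the $C_i$ are projective) and then invoking \cite[Corollary 3.5]{Gillespie2}. This is noticeably shorter and reuses the section's own machinery rather than redoing homological bookkeeping; the only thing the paper's longer version buys is independence from the identification $H_n(\Hom^{\bullet}(-,-))=\Hom_{\K(R)}(-[n],-)$, which the paper states in the preliminaries anyway.
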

\begin{proof} $1.\Rightarrow 2.$ If $0\to K \to P \to N\to 0$ is a short exact sequence of complexes with $P$ projective then $K$ is exact ($P$ and $N$ are exact) and all cycles $Z_n(K)$ are projective by 1. Therefore, $K$ is projective and then $\Hom_{\K(R)} (M[-1],K)=0$ by \cite[Corollary 3.5]{Gillespie2}, so $N\in{\underline {\mathfrak{Pr}}}_{\C(R)}^{-1}(M)$ by Theorem \ref{thm-4-2}.

$2. \Rightarrow 3.$ Clear.

$3. \Rightarrow 1.$ Let $Q$ be a projective module and $Y$ be any submodule of $Q$. Let us prove that  ${\underline {\mathfrak{Pr}}}_{R\Mod}^{-1}(Y)=R\Mod$. For let $X$ be a module and consider the exact complex $${\cal C}:\cdots \to 0 \to X \to E(X) \to C \to 0 \to \cdots$$ ($E(X)$ in the $0$ position). By \cite[Lemma 2.2]{Dur}, $E(X) \in {\underline {\mathfrak{Pr}}}_{R\Mod}^{-1}(Y)$, so ${\cal C}\in{\underline {\mathfrak{Pr}}}_{\C(R)}^{-1}(\underline{Y})$ by assumption. Then, $X \in {\underline {\mathfrak{Pr}}}_{R\Mod}^{-1}(Y)$ by Lemma \ref{lem-sph}.

$1.\Rightarrow 4.$ Let $P$ be a DG-projective complex and $Q$ a subcomplex of $P$. Then, every module $Q_n$ is projective by condition $1.$

Now, let $E$ be an exact complex and let us prove that $\Hom^\bullet(Q,E)$ is exact. 

Let $0\to E \to I \to C\to 0$ be a short exact sequence of complexes with $I$ injective. Since every module $Q_n$ is projective we get that for every $n\in \mathbb{Z}$, $ \Hom^{\bullet} (Q,I)_n \to \Hom^{\bullet} (Q,C)_n$ is epic, and for every $i,j \in \mathbb{Z}$,  $C_i \in {\underline {\mathfrak{Pr}}}_{R\Mod}^{-1}(Q_j)$. Then, by condition 2. we get that $C \in {\underline {\mathfrak{Pr}}}_{\C(R)}^{-1}(Q[n])$ for every $n\in \mathbb{Z}$ ($C$ is exact since $I$ and $E$ are exact), so for every $n\in \mathbb{Z}$, $Z_{n}(\Hom^{\bullet} (Q,I))  \to Z_{n}(\Hom^{\bullet} (Q,C))$ is epic. Therefore, for every $n\in \mathbb{Z}$ the two first columns of the commutative diagram with exact rows $$\xymatrix{&0\ar[d]& 0\ar[d]& 0\ar[d]& \\ 0 \ar[r] & Z_{n}(\Hom^{\bullet} (Q,E)) \ar[r] \ar[d]  & \Hom^{\bullet} (Q,E)_n \ar[r] \ar[d] & B_{n-1}(\Hom^{\bullet} (Q,E)) \ar[r] \ar[d]& 0 \\ 0 \ar[r] & Z_{n}(\Hom^{\bullet} (Q,I)) \ar[r] \ar[d]  & \Hom^{\bullet} (Q,I)_n \ar[r] \ar[d] & B_{n-1}(\Hom^{\bullet} (Q,I)) \ar[r] \ar[d]& 0 \\ 0 \ar[r] & Z_{n}(\Hom^{\bullet} (Q,C)) \ar[r] \ar[d]  & \Hom^{\bullet} (Q,C)_n \ar[r] \ar[d] & B_{n-1}(\Hom^{\bullet} (Q,C)) \ar[r] \ar[d]& 0 \\ &0&0&0}$$ are exact, so the third is also exact. 

Now consider, for every $n\in \mathbb{Z}$, the commutative diagram with exact rows $$\xymatrix{ &0\ar[d]& 0\ar[d]& 0\ar[d]& \\ 0 \ar[r] & B_{n}(\Hom^{\bullet} (Q,E)) \ar[r] \ar[d]  &  Z_{n}(\Hom^{\bullet} (Q,E)) \ar[r] \ar[d] & H_{n}(\Hom^{\bullet} (Q,E)) \ar[r] \ar[d]& 0 \\ 0 \ar[r] & B_{n}(\Hom^{\bullet} (Q,I)) \ar[r] \ar[d]  &  Z_{n}(\Hom^{\bullet} (Q,I)) \ar[r] \ar[d] & H_{n}(\Hom^{\bullet} (Q,I)) \ar[r] \ar[d]& 0 \\ 0 \ar[r] & B_{n}(\Hom^{\bullet} (Q,C)) \ar[r] \ar[d]  &  Z_{n}(\Hom^{\bullet} (Q,C)) \ar[r] \ar[d] & H_{n}(\Hom^{\bullet} (Q,C)) \ar[r] \ar[d]& 0 \\ &0&0&0}$$ 

The first and second columns are exact, so the third one is also exact. But, for every $n\in \mathbb{Z}$, $H_{n}(\Hom^{\bullet} (Q,I))=\Hom_{\K(R)} (Q[n],I)=0$ by \cite[Corollary 3.5]{Gillespie2} since $I$ is contractible.

$4. \Rightarrow 1.$ Let $Q$ be a projective module and $Y$ a submodule of $Q$. Since $\underline{Y}$ is a subcomplex of the DG-projective complex $\underline{Q}$, $\underline{Y}$ must be DG-projective by assumption, so $Y$ is projective.
\end{proof}

It is a well-known fact that a ring is left semi-hereditary if and only if it is left coherent and every submodule of a flat module is flat (i.e.,  the weak global dimension of the ring is at most $1$). Using subprojectivity we can prove a similar result in the categories of complexes. Namely, a ring is left semi-hereditary if and only if it is left coherent and every subcomplex of a DG-flat complex is DG-flat (Corollary \ref{cor-semi-her}). This is so because rings for which subcomplexes of DG-flat complexes are DG-flat are precisely those of weak global dimension at most 1 (Proposition \ref{prop-Weak-dim}).

We first recall that a complex is finitely presented if it is bounded and has finitely presented components (see \cite[Lemma 4.1.1]{JR}). Recall also  that the subprojectivity domain of the class of all finitely presented complexes (respectively, modules) is the class of all flat complexes (respectively, modules) (see \cite[Proposition 2.18]{SubprojAb}). Finally, recall that a complex $F$ is said to be DG-flat if $F_n$ is flat for every $n \in \mathbb{Z}$ and the complex $E \otimes^\bullet F$ is exact for any exact complex $E$ of right $R$-modules (see \cite{Avr}).

\begin{prop}\label{prop-Weak-dim}
For any ring $R$, the following assertions are equivalent.
\begin{enumerate}
\item The weak global dimension of $R$ is  at most $1$.
\item For every finitely presented complex $M$ and every exact complex $N$, if $N_{n} \in {\underline {\mathfrak{Pr}}}_{R\Mod}^{-1}(M_n)$ for every $n \in \mathbb{Z}$, then $N \in {\underline {\mathfrak{Pr}}}_{\C(R)}^{-1}(M)$.
\item For every finitely presented module $M$ and every exact complex $N$, if there exists $n \in \mathbb{Z}$ such that $N_{n} \in {\underline {\mathfrak{Pr}}}_{R\Mod}^{-1}(M)$, then $ N \in {\underline {\mathfrak{Pr}}}_{\C(R)}^{-1}(\underline{M}[n])$.
\item Every subcomplex of a DG-flat complex is DG-flat.
\end{enumerate}
\end{prop}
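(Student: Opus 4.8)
The plan is to follow closely the pattern of Proposition \ref{prop-hered}, systematically replacing ``projective'' by ``flat'', ``left hereditary'' by ``$\wdim R\le 1$'', ``DG-projective'' by ``DG-flat'', and the functor $\Hom^{\bullet}$ together with injective complexes by $\otimes^{\bullet}$ together with projective (hence contractible) complexes, while restricting the first variable $M$ to finitely presented objects. Concretely, I would establish the cycle $1\Rightarrow 2\Rightarrow 3\Rightarrow 1$ together with the equivalence $1\Leftrightarrow 4$. The one technical input underpinning both $1\Rightarrow 2$ and $1\Rightarrow 4$ is the Lazard-type description of flat complexes: every flat complex (equivalently, every exact complex with flat cycles) is a direct limit of finitely generated projective complexes (see \cite{JR}), and each such finitely generated projective complex is contractible.

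For $1\Rightarrow 2$, I would invoke Theorem \ref{thm-4-2}: since $N_n\in{\underline {\mathfrak{Pr}}}_{R\Mod}^{-1}(M_n)$ for all $n$ by hypothesis, it suffices to check that $\Hom_{\K(R)}(M[-1],K)=0$ for the kernel $K$ of an epimorphism $P\to N$ with $P$ a projective complex. As $P$ and $N$ are exact, so is $K$, and each cycle $Z_n(K)$ embeds in the projective module $Z_n(P)$; by $\wdim R\le 1$ this makes every $Z_n(K)$ flat, so $K$ is a flat complex. Writing $K$ as a direct limit of finitely generated projective complexes $P_\lambda$, the key point is that, because $M$ is finitely presented (bounded, with finitely presented components), the products over the degrees occurring in $\Hom^{\bullet}(M,-)$ are finite, so $\Hom^{\bullet}(M,-)$ and its homology commute with this direct limit; hence $\Hom_{\K(R)}(M[-1],K)$ is the direct limit of the groups $\Hom_{\K(R)}(M[-1],P_\lambda)$, each of which vanishes by \cite[Corollary 3.5]{Gillespie2} since $P_\lambda$ is contractible. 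This vanishing, for finitely presented $M$ and a flat but in general non-contractible complex $K$, is the main obstacle, and it is exactly the place where finite presentation of $M$ is indispensable; contrast this with Proposition \ref{prop-hered}, where $K$ turned out to be projective (hence contractible) and no finiteness on $M$ was required.

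The remaining steps of the cycle are lighter. The implication $2\Rightarrow 3$ is obtained, as in Proposition \ref{prop-hered}, by specializing condition $2$ to the finitely presented complex $\underline{M}[n]$, whose only nonzero component is the finitely presented module $M$ in degree $n$ (the hypothesis $N_k\in{\underline {\mathfrak{Pr}}}_{R\Mod}^{-1}(\underline{M}[n]_k)$ holds for $k=n$ by assumption and trivially for $k\neq n$ since $\underline{M}[n]_k=0$). For $3\Rightarrow 1$, I would show that submodules of flat modules are flat. Given a flat module $Q$ and a submodule $Y$, consider the exact complex $0\to Y\to Q\to Q/Y\to 0$ with $Q$ in degree $0$. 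For any finitely presented module $C$, the flat module $Q$ lies in ${\underline {\mathfrak{Pr}}}_{R\Mod}^{-1}(C)$ by \cite[Proposition 2.18]{SubprojAb}, so condition $3$ with $n=0$, together with Lemma \ref{lem-sph}, yields $Y=Z_0\in{\underline {\mathfrak{Pr}}}_{R\Mod}^{-1}(C)$; as $C$ ranges over all finitely presented modules, \cite[Proposition 2.18]{SubprojAb} forces $Y$ to be flat, whence $\wdim R\le 1$.

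Finally, for $1\Leftrightarrow 4$: to prove $4\Rightarrow 1$ I would note that $\underline{Q}$ is DG-flat whenever $Q$ is flat, because $E\otimes^{\bullet}\underline{Q}\cong E\otimes_R Q$ is exact for every exact complex $E$; hence a submodule $Y\subseteq Q$ yields the subcomplex $\underline{Y}\subseteq\underline{Q}$, which is DG-flat by hypothesis, forcing $Y$ flat and so $\wdim R\le 1$. For $1\Rightarrow 4$, let $G$ be a subcomplex of a DG-flat complex $F$; each $G_n\subseteq F_n$ is a submodule of a flat module, hence flat by condition $1$. To see that $E\otimes^{\bullet}G$ is exact for every exact complex $E$ of right modules, I would resolve $0\to K\to P\to E\to 0$ with $P$ a projective, thus contractible, complex. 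Since each $G_n$ is flat, tensoring with $G$ is exact in each degree and produces a short exact sequence of total complexes in which $P\otimes^{\bullet}G$ is contractible and $K$ is again a flat complex (its cycles embed in projective modules). Using once more the Lazard description of $K$ as a direct limit of contractible complexes, $K\otimes^{\bullet}G$ is exact, and the associated long exact homology sequence then gives the exactness of $E\otimes^{\bullet}G$. I expect the only delicate points to be the two appeals to the commutation of $\Hom^{\bullet}(M,-)$, respectively $-\otimes^{\bullet}G$, with the relevant direct limits, both of which rest on the finiteness of the products appearing in these complexes.
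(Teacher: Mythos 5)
Your proposal is correct, and for the cycle $1\Rightarrow 2\Rightarrow 3\Rightarrow 1$ and the implication $4\Rightarrow 1$ it essentially coincides with the paper's proof (the paper also reduces $3\Rightarrow 1$ to the two-term exact complex and Lemma \ref{lem-sph}, and $1\Rightarrow 2$ to showing the kernel $K$ is a flat complex). Where you genuinely diverge is in how the two key vanishing/exactness statements are obtained. For $1\Rightarrow 2$, once $K$ is known to be flat, the paper simply cites \cite[Proposition 2.18]{SubprojAb} (recalled just before the proposition): flat complexes form the subprojectivity domain of the class of finitely presented complexes, so $K\in{\underline {\mathfrak{Pr}}}_{\C(R)}^{-1}(M[-1])$ and $\Hom_{\K(R)}(M[-1],K)=0$ follows from \cite[Corollary 3.5]{Gillespie2}; your Lazard-plus-direct-limit argument re-derives exactly this fact, correctly but at greater length. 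For $1\Rightarrow 4$ the routes are really different: the paper stays inside the subprojectivity machinery, invoking the lifting criterion of \cite[Proposition 6.2]{Cris} (it suffices that every map from a finitely presented complex to $N$ lifts along an epic quasi-isomorphism $P\to N$ with $P$ DG-projective) and then running the pullback/mapping-cone argument of Theorem \ref{thm-4-2} using the already-established condition 2 applied to the exact kernel $E$; you instead verify the definition of DG-flatness directly by resolving the exact complex $E$ by a projective complex and using the direct-limit description of the flat kernel. Your version is more self-contained homologically and avoids the external criterion from Christensen--Holm, at the price of two appeals to the Lazard-type theorem for complexes and the commutation of $\Hom^{\bullet}(M,-)$, respectively $-\otimes^{\bullet}G$, with filtered colimits; the paper's version is shorter and keeps the whole proof within the subprojectivity framework it has built. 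Both are sound.
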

\begin{proof} $1. \Rightarrow 2.$ Consider a short exact sequence of complexes $0\to K \to P \to N\to 0$ with $P$ projective. Since all cycles $Z_n(P)$ are projective, every cycle $Z_n(K)$ is flat by assumption. Then, $K$ is flat ($K$ is exact since $P$ and $N$ are), so $K\in {\underline {\mathfrak{Pr}}}_{\C(R)}^{-1}(M[-1])$ and hence $\Hom_{\K(R)} (M[-1],K)=0$ by \cite[Corollary 3.5]{Gillespie2}. Therefore, $ N \in {\underline {\mathfrak{Pr}}}_{\C(R)}^{-1}(M)$ by Theorem \ref{thm-4-2}.

$2. \Rightarrow 3.$ Clear.

$3. \Rightarrow 1.$ Let $X$ be a submodule of a flat module $F$. Let us prove that $X \in {\underline {\mathfrak{Pr}}}_{R\Mod}^{-1}(M)$ for every finitely presented module $M$. For let $M$ be a finitely presented module and consider the exact complex $${\cal F}:\cdots \to 0 \to X \to F \to C \to 0 \to \cdots$$ with $F$ in the $0$-position.

Since $F \in {\underline {\mathfrak{Pr}}}_{R\Mod}^{-1}(M)$, ${\cal F}\in {\underline {\mathfrak{Pr}}}_{\C(R)}^{-1}(\underline{M})$ by assumption, and then $X \in {\underline {\mathfrak{Pr}}}_{R\Mod}^{-1}(M)$ by Lemma \ref{lem-sph}.

$1.\Rightarrow 4.$ Let $F$ be a DG-flat complex, $N$ be a subcomplex of $F$ and $ P \to N$ be an epic quasi-isomorphism with $P$ DG-projective. To prove that $N$ is DG-flat it is sufficient to prove that for every finitely presented complex $M$,  $\Hom_{\C(R)}(M,P) \to \Hom_{\C(R)}(M,N)$ is epic (see \cite[Proposition 6.2]{Cris}). For let $f:M \to N$ be a morphism of complexes with $M$ finitely presented and consider the following pullback diagram $$\xymatrix{ 0\ar[r]& E\ar[r]\ar@{=}[d]& D\ar[r] \ar[d]& M \ar[r]\ar[d]^f&0\\ 0\ar[r]& E\ar[r]& P\ar[r]& N \ar[r]&0}$$

Every module $N_n$ is flat by $1$, so $N_{n} \in {\underline {\mathfrak{Pr}}}_{R\Mod}^{-1}(M_n)$ for every $n \in \mathbb{Z}$, and hence the short exact sequence $0 \to E \to D \to M \to 0$ splits at the module level by Proposition \ref{prop-pull} since for every $n\in \mathbb{Z}$, $D_n$ is a pullback (see Lemma \ref{lem-pull}). Then, the sequence  $ 0 \to E \to D \to M \to 0$ is equivalent to a short exact sequence $ 0\to E\to M(g)\to M \to 0$ where $M(g)$ is the mapping cone of a morphism $g: M[-1] \to E$ (see \cite[Section 3.3]{Enochs}). 

Now, every module $E_n$ is flat by condition $1$. So,  $E_{n} \in {\underline {\mathfrak{Pr}}}_{R\Mod}^{-1}(M_{n+1})$ for every $n \in \mathbb{Z}$. Thus, $E \in {\underline {\mathfrak{Pr}}}_{\C(R)}^{-1}(M[-1])$ by condition $2$ and then $\Hom_{\K(R)}(M[-1],E)=0$ by \cite[Corollary 3.5]{Gillespie2}.

In particular, $g: M[-1] \to E$ is null-homotopic so the sequence $ 0\to E\to M(g)\to M \to 0$ splits (see \cite[Proposition 3.3.2]{Enochs})and then the sequence $0\to E\to D\to M \to 0$ splits. Therefore,  $f$ clearly factors through $P \to N$.

$4. \Rightarrow 1.$ Let $F$ be a flat module and $Y$ a submodule of $F$. Then $\underline{Y}$ is a subcomplex of the DG-flat complex $\underline{F}$, so $\underline{Y}$ is also DG-flat by assumption and therefore $Y$ is flat.
\end{proof}

\begin{cor}\label{cor-semi-her}
For any ring $R$ the following statements are equivalent.
\begin{enumerate}
\item $R$ is left semi-hereditary.
\item $R$ is left coherent and for every finitely presented complex $M$ and every exact complex $N$, if $N_{n} \in {\underline {\mathfrak{Pr}}}_{R\Mod}^{-1}(M_n)$ for every $n \in \mathbb{Z}$, then $ N \in {\underline {\mathfrak{Pr}}}_{\C(R)}^{-1}(M)$.
\item $R$ is left coherent and for every finitely presented module $M$ and every exact complex $N$, if there exists $n \in \mathbb{Z}$ such that $N_{n} \in {\underline {\mathfrak{Pr}}}_{R\Mod}^{-1}(M)$, then $ N \in {\underline {\mathfrak{Pr}}}_{\C(R)}^{-1}(\underline{M}[n])$.
\item $R$ is left coherent and every subcomplex of a DG-flat complex is DG-flat.
\end{enumerate}
\end{cor}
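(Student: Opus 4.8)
The plan is to obtain this corollary as an immediate consequence of Proposition \ref{prop-Weak-dim} together with the classical characterization recalled just above the statement, namely that $R$ is left semi-hereditary if and only if $R$ is left coherent and $\wdim(R)\le 1$. No new homological computation is needed: the substantive work of translating the subprojectivity condition on complexes into the weak-dimension bound has already been done in Proposition \ref{prop-Weak-dim}, so the task here is purely one of bookkeeping.

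First I would observe that statements 2, 3 and 4 of this corollary are obtained from statements 2, 3 and 4 of Proposition \ref{prop-Weak-dim}, respectively, simply by prepending the single hypothesis ``$R$ is left coherent''. Since Proposition \ref{prop-Weak-dim} asserts that each of its statements 2, 3 and 4 is equivalent to its statement 1 (that $\wdim(R)\le 1$), it follows that each of the statements 2, 3 and 4 of the present corollary is equivalent to the conjunction ``$R$ is left coherent and $\wdim(R)\le 1$''. By the classical characterization, this conjunction is exactly statement 1, that $R$ is left semi-hereditary. Chaining these equivalences yields $1\Leftrightarrow 2\Leftrightarrow 3\Leftrightarrow 4$.

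The only point that really has to be checked is the clean separation of the coherence hypothesis from the weak-dimension hypothesis. This is harmless precisely because Proposition \ref{prop-Weak-dim} was established with no coherence assumption whatsoever, so coherence enters the picture solely through the classical equivalence and there is no interaction between the two hypotheses to untangle. In that sense there is no genuine obstacle: the corollary merely records the standard upgrade from ``weak global dimension at most one'' to ``left semi-hereditary'' in the presence of coherence, now phrased in the language of subprojectivity domains of complexes supplied by Proposition \ref{prop-Weak-dim}.
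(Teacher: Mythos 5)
Your proposal is correct and matches the paper's own (implicit) argument exactly: the paper derives Corollary \ref{cor-semi-her} by combining Proposition \ref{prop-Weak-dim} with the classical fact that $R$ is left semi-hereditary if and only if $R$ is left coherent and $\wdim (R)\le 1$, which is precisely your bookkeeping step of prepending coherence to each of statements 2--4. Nothing further is needed.
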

\bigskip

\noindent\textbf{Acknowledgment:} The second and fourth authors were partially supported by Ministerio de Econom\'{\i}a y Competitividad, grant reference 2017MTM2017-86987-P  and  P20-00770 grant from Junta de Andaluc\'{\i}a.

Driss Bennis:    Faculty of Sciences, Mohammed V University in Rabat, Rabat, Morocco.

\noindent e-mail address: driss.bennis@fsr.um5.ac.ma; driss$\_$bennis@hotmail.com

J. R. Garc\'{\i}a Rozas: Departamento de  Matem\'{a}ticas,
Universidad de Almer\'{i}a, 04071 Almer\'{i}a, Spain.

\noindent e-mail address: jrgrozas@ual.es

Hanane Ouberka:    Faculty of Sciences, Mohammed V University in Rabat, Rabat, Morocco.

\noindent e-mail address: hanane$\_$ouberka@um5.ac.ma; ho514@inlumine.ual.es 

Luis Oyonarte: Departamento de  Matem\'{a}ticas, Universidad de
Almer\'{i}a, 04071 Almer\'{i}a, Spain.

\noindent e-mail address: oyonarte@ual.es

\end{document}